\renewcommand{\phi}{\varphi}
\newcommand{\C}{\mathbb{C}}
\newcommand{\z}{\mathbb{Z}}
\newcommand{\q}{\mathbb{Q}}
\newcommand{\f}{\mathbb{F}}
\newcommand{\mo}[1]{\mod{#1}}
\newtheoremstyle{pedro}{}{}{\itshape}{}{\sc}{~--}{ }{\thmname{#1}\thmnumber{ #2}\thmnote{ (#3)}}
\newtheoremstyle{pedrodef}{}{}{}{}{\sc}{~--}{ }{\thmname{#1}\thmnumber{ #2}\thmnote{ (#3)}}
\theoremstyle{pedro}
\newtheorem{lem}{Lemma}[section]
\newtheorem{thm}[lem]{Theorem}
\newtheorem{prop}[lem]{Proposition}
\newtheorem{coro}[lem]{Corollary}
\theoremstyle{remark}
\newtheorem{rmk}[lem]{Remark}
\theoremstyle{pedrodef}
\title[Stirling numbers]{Not so new congruences for Stirling numbers of the first kind, with an application to Chern classes}
\author{Pierre Guillot}
\author{Yohann Ségalat}
\address{
Universit\'{e} de Strasbourg \& CNRS\\
Institut de Recherche Math\'{e}matique Avanc\'{e}e\\
7~Rue Ren\'{e} Descartes\\
67084 Strasbourg, France}
\email{guillot@math.unistra.fr segalat@math.unistra.fr}
\let\oldtocsection=\tocsection
\let\oldtocsubsection=\tocsubsection
\let\oldtocsubsubsection=\tocsubsubsection
\renewcommand{\tocsection}[2]{\hspace{0em}\oldtocsection{#1}{#2}}
\renewcommand{\tocsubsection}[2]{\hspace{2em}\oldtocsubsection{#1}{#2}}
\renewcommand{\tocsubsubsection}[2]{\hspace{2em}\oldtocsubsubsection{#1}{#2}}
\numberwithin{equation}{section}
\begin{document}

\maketitle


\begin{abstract}
In this paper we give simple expressions, involving binomial coefficients, for the value of~$c(p^r m, k)$ modulo~$p^r$, where~$p$ is a prime not dividing~$m$ and~$r \ge 1$. Here we write~$c(n,k)$ for Stirling numbers of the first kind.

As an application, we compute the Chern classes of permutation representations of cyclic groups.
\end{abstract}

\noindent{\bfseries Important information.} After we had submitted a first version of this paper for publication (with the title ``New congruences...''), we learned from the referee about the paper by C\'ardenas and Lluis, see~\cite{cardenas}. This work from 1987 contains the bulk of our results. More precisely, our Theorem~\ref{thm-poly-odd-prime} is proved, and in the language of Chern classes ; the rest can be deduced quite easily, and no good reasons seem to remain to publish the present paper.

Situations like this are pretty rare nowadays. Consider that:

\begin{itemize}
\item a search on MathSciNet about Stirling numbers, or Chern classes of cyclic groups, would not (and did not) bring up the paper by C\'ardenas and Lluis ;
\item that paper's title, or its review on MathSciNet, do not indicate that it should contain our results ;
\item a question asked on MathOverflow about Stirling numbers (which the reader can find easily) led us to believe quite firmly that our congruences had never appeared -- experts are usually so adept at giving references on MathOverflow !
\item a private communication with the author of the 2015 paper~\cite{recent} on Stirling numbers confirmed our belief in the novelty of our work (whereas in fact, Theorem 1.3 in this paper is weaker than the C\'ardenas and Lluis result).
\end{itemize}

It will be useful, at least, to keep this paper in the Arxiv. This way, people searching for the congruences of the kind we present will have a hit, and will be told about the C\'ardenas and Lluis paper.

\section{Introduction}

We shall write~$c(n,k)$ for the signless Stirling numbers of the first kind, for which there are so many possible definitions (see the discussion in \S1.3 of~\cite{stan}). While many congruences are available in the literature involving Stirling numbers {\em of the second kind}, much less seems to be known about~$c(n,k)$ in this respect. See~\cite{howard}, \cite{howard2}, \cite{peele}, and more recently~\cite{recent}, whose results overlap partly with ours. 

Let us write~$v_p(n)$ for the highest power of~$p$ dividing~$n$, when~$n \ne 0$. In this paper we are interested in~$c(n, k)$ modulo~$p^r$ with $r= v_p(n)$, when~$r > 0$ -- that is, when~$p$ divides~$n$. We obtain explicit, simple expressions involving binomial coefficients.

\begin{thm} \label{thm-main}
Let~$n$ be an integer, let~$p$ be an odd prime dividing~$n$, and let~$r= v_p(n)$. If~$k$ is an integer such that~$n-k$ is not divisible by~$p-1$, then one has 
\[ c(n, k) \equiv 0 \mo{p^r} \, .   \]
If on the other hand~$n-k = \ell(p-1)$ then one has 
\[ c(n, k) \equiv (-1)^\ell { n/p \choose \ell} \mo{p^r} \, .   \]

Now assume that~$n$ is even and let~$r= v_2(n)$. Suppose first that $r \ge 3$.
If~$n - k = 2\ell + 1$, then 
\[ c(n, k) \equiv (-1)^\ell {n/4 \choose \ell} \frac{n} {2} \mo{2^r} \, .   \]
If~$n-k = 2\ell$, then 
\[ c(n,k) \equiv (-1)^\ell \left[  {n/4 \choose \ell} - \frac{n} {2} {n/4 \choose \ell -1 }    \right]   \mo{2^r} \, .   \]
For small values of~$r$ these expressions must be modified as follows. For~$r=1$ one has
\[ c(n, k) \equiv { n/2 \choose n-k} \mo{2} \, .   \]
For~$r=2$, if~$n-k = 2 \ell$ one has 
\[ c(n,k) \equiv (-1)^\ell {n/4 \choose \ell} \mo{4} \, ,   \]
while if~$n-k = 2\ell + 1$ one has 
\[ c(n,k) \equiv (-1)^\ell {n/4 \choose \ell} \frac{n} {2} \mo{4} \, .   \]
\end{thm}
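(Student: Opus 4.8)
The natural starting point is the generating function identity
\[
\sum_{k} c(n,k)\, x^k \;=\; x(x+1)(x+2)\cdots(x+n-1)\,,
\]
so that $c(n,k)$ is, up to sign, an elementary symmetric function of $1, 2, \dots, n-1$. The plan is to reduce everything modulo $p^r$ by grouping the factors according to their $p$-adic valuation. Write $n = p^r m$ with $p \nmid m$. Among the integers $1, \dots, n-1$, exactly $n/p - 1$ are divisible by $p$ (and then one recursively analyzes those), while the remaining ones are units mod $p$. So I would first peel off the factors divisible by $p$: modulo $p^r$ only the constant and linear terms of $\prod_{p \mid j} (x+j)$ survive in a controlled way, and $\prod_{p\nmid j, \, 1\le j \le n-1}(x+j)$ should be handled using the classical congruence $\prod_{j=1}^{p-1}(x+j) \equiv x^{p-1} - 1 \pmod p$ and its refinements mod $p^r$ (this is where $p$ odd versus $p=2$, and the value of $r$, genuinely matter — the unit group $(\z/p^r)^\times$ is cyclic for $p$ odd but not for $p=2$, $r\ge 3$, which explains the case split in the statement).

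Concretely, for $p$ odd I would write
\[
\prod_{\substack{1\le j \le n-1 \\ p \nmid j}} (x+j) \equiv \bigl(x^{p-1}-1\bigr)^{\,n/p} \pmod{p}\,,
\]
wait — one needs this mod $p^r$, not just mod $p$; so the real first step is to establish the sharper statement that $\prod_{j \in (\z/p^r)^\times}(x - j) \equiv (x^{p-1}-1)^{p^{r-1}} \pmod{p^r}$ (a standard fact, provable by induction on $r$), and then observe that the product over $1 \le j \le n-1$ with $p \nmid j$ is this raised to the power $m$ (since the units mod $p^r$ repeat $m$ times), times a correction. Reading off the coefficient of $x^k$ from $\bigl[(x^{p-1}-1)^{p^{r-1}}\bigr]^m \cdot (\text{contribution of multiples of }p)$ and using $v_p\bigl(\binom{p^{r-1}m}{\ell}\bigr)$ considerations (Kummer's theorem) to see which binomial coefficients collapse mod $p^r$, one should land exactly on $(-1)^\ell \binom{n/p}{\ell}$ when $p-1 \mid n-k$ and $0$ otherwise. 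The factors $x + j$ with $p \mid j$ contribute a factor that is $\equiv x^{n/p-1}(x + (\text{something} \cdot p)) \pmod{p^r}$ after recursion, but since we only want the answer mod $p^r$ and those extra terms carry a factor of $p$, one must track them carefully — this recursive bookkeeping on the multiples of $p$ is, I expect, the main technical obstacle.

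For $p = 2$ the same strategy applies but $(x^{p-1}-1)^{p^{r-1}} = (x-1)^{2^{r-1}}$ is too crude; instead one uses that $(\z/2^r)^\times \cong \z/2 \times \z/2^{r-2}$ for $r \ge 3$, giving a factorization $\prod_{j\in(\z/2^r)^\times}(x-j) \equiv (x^2-1)^{2^{r-2}} \pmod{2^r}$, and then the coefficient extraction produces $\binom{n/4}{\ell}$ in place of $\binom{n/2}{\ell}$, with the extra linear-term contribution from the even $j$'s now visibly of the form $\tfrac n2 \binom{n/4}{\ell-1}$ or $\tfrac n2 \binom{n/4}{\ell}$ depending on the parity of $n-k$. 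The cases $r = 1$ and $r = 2$ are degenerate (the group is trivial or $\z/2$), so the formulas must be adjusted by hand, which is routine once the machinery is set up. Throughout, I would lean on Theorem \ref{thm-poly-odd-prime} (the polynomial congruence stated earlier for odd primes) to supply the odd-$p$ case directly, so that the real work is organizing the $p=2$ analysis and verifying that the binomial coefficients appearing in the exact expansion reduce mod $2^r$ to the claimed closed forms.
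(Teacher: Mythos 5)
Your skeleton is the same as the paper's (establish a polynomial congruence mod $p^r$ for the product over one period, raise it to the $m$-th power, read off coefficients), but as written the load-bearing facts are asserted rather than proved. For odd $p$, the congruence $\prod_{u\in(\z/p^r\z)^\times}(x-u)\equiv(x^{p-1}-1)^{p^{r-1}}\pmod{p^r}$ is not a throwaway ``standard fact'': it is, up to homogenization, the content of Theorem~\ref{thm-poly-odd-prime}, whose proof occupies Sections~\ref{sec-p-odd} and~\ref{sec-cyclo} of the paper -- and the obvious route (factoring $a^{p-1}-b^{p-1}$ through the subgroup of order $p-1$) is exactly the kind of argument that fails over non-domains, see Remark~\ref{rmk-counter-example}, which is why the cyclotomic-polynomial detour is needed. (A direct induction on $r$ for the units-only product can in fact be made to work for odd $p$, using that $2^r\mid$ ... rather, that $\binom{p}{2}\equiv 0 \pmod p$, but you do not carry it out.) If instead you lean on Theorem~\ref{thm-poly-odd-prime}, as you say at the end, then the odd case does go through -- but then your Kummer-theorem step is superfluous: once $P_n(t)\equiv(1-t^{p-1})^{n/p}\pmod{p^r}$, the coefficients are literally $(-1)^\ell\binom{n/p}{\ell}$ with no valuation analysis, which suggests the coefficient extraction was not actually done. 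The factors divisible by $p$, which you flag as ``the main technical obstacle'' and leave unresolved, are precisely what Lemma~\ref{lem-p-of-pt} handles: for odd $p$ they contribute $x^{n/p}$ and nothing else mod $p^r$, and proving this needs the level-$(r-1)$ congruence together with the valuation of $\binom{p^{r-1}}{\ell}p^{\ell(p-1)}$.

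The $p=2$ half -- which is where the statement is delicate, with the terms $\tfrac n2\binom{n/4}{\ell}$ and $-\tfrac n2\binom{n/4}{\ell-1}$ and the special cases $r=1,2$ -- is essentially not derived. Your claimed congruence $\prod_{u\in(\z/2^r\z)^\times}(x-u)\equiv(x^2-1)^{2^{r-2}}\pmod{2^r}$ happens to be true, but it does not follow from the isomorphism $(\z/2^r\z)^\times\cong\z/2\times\z/2^{r-2}$; that is again the unsound factorization heuristic of Remark~\ref{rmk-counter-example}, and unlike the odd case the naive lift-induction picks up a nonvanishing linear correction at each step, which is why the paper organizes the induction differently (Lemma~\ref{lem-ind-p2} and the explicit computation in Section~\ref{sec-p2}). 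Moreover, every correction term in the final formulas comes from the even factors $\prod_i(x+2i)$, i.e.\ from $2^j c(n/2,n/2-j)$ mod $2^r$ -- a genuine recursion into smaller Stirling numbers which you dismiss as ``visibly of the form $\tfrac n2\binom{n/4}{\ell-1}$ or $\tfrac n2\binom{n/4}{\ell}$'' without computation; this is exactly the content of Theorem~\ref{thm-poly-p2} and its corollary (including the step $(1+2^{r-1}a)^m\equiv 1+2^{r-1}ma\pmod{2^r}$), and the cases $r=1,2$ also need the explicit forms of $P_2$ and $P_4$. So: right strategy, correct guesses for the intermediate congruences, but the parts where all the actual work lies -- especially everything at $p=2$ -- are assumed rather than established.
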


Note that this Theorem improves on Theorem 1.3 in~\cite{recent}, which gives only congruences mod~$p^e$ with~$e \le \frac{r+1} {2}$, rather than mod~$p^r$. 

Since all the primes dividing~$n$ have been considered, the Chinese remainder theorem implies that, at least in principle, the number~$c(n,k)$ is entirely determined modulo~$n$ by our formulas.

 Much is known about~$v_p\left[{m \choose k}\right]$: for example it is classical that it equals the number of carries when you compute the sum of~$k$ and~$m-k$ in base~$p$ (this is known as Kummer's theorem). A particular case is that~$v_p\left[{p^r \choose k}\right] = r - v_p(k)$ for~$k > 0$. The following corollaries are then simple exercises, and they seem worth stating separately.

\begin{coro} \label{coro-vp-stirling-odd}
Let~$p$ be an odd prime and~$r>0$. If~$k$ is an integer such that~$p^r -k$ is not divisible by~$p-1$, then~$v_p(c(p^r, k)) \ge r$.

If on the other hand~$p^r - k = \ell (p-1)$, then~$v_p(c(p^r, k)) = r-1 - v_p(\ell)$.

\end{coro}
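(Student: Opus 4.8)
The plan is to derive everything directly from Theorem~\ref{thm-main}, specialized to $n = p^r$ (so that $v_p(n) = r$ and $n/p = p^{r-1}$), combined with the quoted special case of Kummer's theorem, namely $v_p\!\left[\binom{p^s}{j}\right] = s - v_p(j)$ for $0 < j \le p^s$.

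First I would dispose of the case in which $p-1$ does not divide $p^r - k$. Here the first clause of Theorem~\ref{thm-main} gives $c(p^r,k) \equiv 0 \mo{p^r}$, which is exactly the statement $v_p(c(p^r,k)) \ge r$; nothing further is needed.

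Next, suppose $p^r - k = \ell(p-1)$. I would first reduce to the range $1 \le \ell \le p^{r-1}$: the value $\ell = 0$ forces $k = p^r$, where $c(p^r,p^r) = 1$, whereas if $\ell > p^{r-1}$ then $\binom{p^{r-1}}{\ell} = 0$ and Theorem~\ref{thm-main} only yields $c(p^r,k) \equiv 0 \mo{p^r}$, so one is back in the previous situation (and the asserted equality should then be read as the inequality $v_p \ge r$). For $1 \le \ell \le p^{r-1}$, the second clause of Theorem~\ref{thm-main} gives $c(p^r,k) \equiv (-1)^\ell \binom{p^{r-1}}{\ell} \mo{p^r}$, and the quoted identity (with $s = r-1$, $j = \ell$) gives $v_p\!\left[\binom{p^{r-1}}{\ell}\right] = (r-1) - v_p(\ell)$, a nonnegative integer since $\ell \le p^{r-1}$ forces $v_p(\ell) \le r-1$.

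The last point — and the only one I expect to require the slightest care — is that a congruence modulo $p^r$ pins down the $p$-adic valuation exactly as soon as that valuation is known to be $< r$: if $v_p(a) < r$ and $a \equiv b \mo{p^r}$, then $b - a$ is divisible by $p^r$, and the ultrametric inequality gives $v_p(b) = \min(v_p(a), v_p(b-a)) = v_p(a)$. Applying this with $a = (-1)^\ell \binom{p^{r-1}}{\ell}$ and $b = c(p^r,k)$, legitimate because $(r-1) - v_p(\ell) \le r-1 < r$, we conclude $v_p(c(p^r,k)) = r-1 - v_p(\ell)$. I do not anticipate a genuine obstacle: the whole content is carried by Theorem~\ref{thm-main}, and what remains is the elementary bookkeeping above, the single item worth isolating being the reason a mod-$p^r$ congruence suffices in the second case.
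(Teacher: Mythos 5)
Your proof is correct and follows exactly the route the paper intends: the paper leaves this corollary as a ``simple exercise'' deduced from Theorem~\ref{thm-main} (with $n=p^r$) together with the quoted fact that $v_p\bigl[\binom{p^{r-1}}{\ell}\bigr]=r-1-v_p(\ell)$, which is precisely your argument, including the observation that a congruence mod~$p^r$ fixes the valuation once it is known to be below~$r$. Your remark about the boundary cases $\ell=0$ and $\ell>p^{r-1}$ is a sensible reading of the statement and does not conflict with anything in the paper.
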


For~$p=2$:

\begin{coro}
Let~$r \ge 3$. If~$2^r - k = 2\ell +1$ then~$v_2(c(2^r, k)) \ge r$ unless~$\ell= 0$ or~$2^{r-2}$, and in either case~$v_2(c(2^r, k)) = r-1$.

If on the other hand~$2^r - k = 2\ell$, then~$v_2(c(2^r, k)) = r - 2 - v_2(\ell)$.


\end{coro}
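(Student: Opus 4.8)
The plan is to feed the $p = 2$, $r \ge 3$ part of Theorem~\ref{thm-main} into the particular case of Kummer's theorem recalled above, namely that $v_2\left[\binom{2^s}{j}\right] = s - v_2(j)$ for $0 < j \le 2^s$. Writing $n = 2^r$, so that $n/2 = 2^{r-1}$ and $n/4 = 2^{r-2}$, I would set aside the trivial case $\ell = 0$ (which is just $c(n,n) = 1$) and argue in the range $1 \le \ell \le n/4$, which is the range in which Theorem~\ref{thm-main} carries non-trivial information, much as for Corollary~\ref{coro-vp-stirling-odd}.

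For $n - k = 2\ell + 1$, Theorem~\ref{thm-main} gives $c(2^r, k) \equiv (-1)^\ell\, 2^{r-1}\binom{2^{r-2}}{\ell} \mo{2^r}$; the $2$-adic valuation of the right-hand side is $(r-1) + (r-2) - v_2(\ell) = 2r - 3 - v_2(\ell)$ (and it is $r-1$ when $\ell = 0$). This is $\ge r$ exactly when $v_2(\ell) \le r - 3$, and the only $\ell$ in the relevant range for which it fails is $\ell = 2^{r-2}$; there, as when $\ell = 0$, the valuation equals $r - 1$ and $c(2^r, k) \equiv 2^{r-1} \mo{2^r}$. This gives the first assertion.

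For $n - k = 2\ell$, Theorem~\ref{thm-main} gives
\[ c(2^r, k) \equiv (-1)^\ell \left[ \binom{2^{r-2}}{\ell} - 2^{r-1}\binom{2^{r-2}}{\ell-1} \right] \mo{2^r} . \]
The leading term has valuation $v_2\left[\binom{2^{r-2}}{\ell}\right] = r - 2 - v_2(\ell) \le r - 2$, whereas the correction term $2^{r-1}\binom{2^{r-2}}{\ell-1}$ has valuation $r - 1$ if $\ell = 1$ and at least $r$ if $\ell \ge 2$---in the latter case because $1 \le \ell - 1 < 2^{r-2}$ forces $v_2(\ell - 1) \le r - 3$. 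In either case the correction term has valuation strictly larger than the leading term, so the bracket has valuation precisely $r - 2 - v_2(\ell)$; since this is $< r$, it equals $v_2(c(2^r,k))$, which is the second assertion.

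Once Theorem~\ref{thm-main} is available this is essentially bookkeeping with $2$-adic valuations, so I do not anticipate a real obstacle; the one point that wants a little care is the correction term in the even case, where one must check that it can neither dominate nor, modulo $2^r$, cancel the leading term---precisely the content of the bound on $v_2(\ell-1)$ used above. After that, the boundary values $\ell = 0$ and $\ell = 2^{r-2}$ only have to be recorded, and the corollary follows.
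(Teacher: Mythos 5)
Your argument is correct, and it is exactly the route the paper has in mind: the corollary is offered there as a ``simple exercise'' from Theorem~\ref{thm-main} combined with the fact that $v_2\left[\binom{2^s}{j}\right]=s-v_2(j)$ for $0<j\le 2^s$, and your bookkeeping supplies the details faithfully, including the one point that genuinely needs care, namely that the correction term $2^{r-1}\binom{2^{r-2}}{\ell-1}$ always has strictly larger valuation than the leading term $\binom{2^{r-2}}{\ell}$.

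One caveat about your closing claim that ``the corollary follows'': the restriction $1\le\ell\le 2^{r-2}$ is not merely the range where Theorem~\ref{thm-main} is non-trivial. In the odd case the leftover range $\ell>2^{r-2}$ is harmless, since $\binom{2^{r-2}}{\ell}=0$ and the congruence gives $v_2\ge r$ as asserted; but in the even case the equality $v_2(c(2^r,k))=r-2-v_2(\ell)$ cannot be recovered for $2^{r-2}<\ell<2^{r-1}$, and is in fact false there: Theorem~\ref{thm-main} gives $c(2^r,k)\equiv \pm 2^{r-1}$ at $\ell=2^{r-2}+1$ and $\equiv 0$ beyond, and concretely $c(8,2)=13068$ has $v_2=2$ while the formula predicts $1$. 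So the even half of the corollary must be read with the implicit restriction $\ell\le 2^{r-2}$ (equivalently $k\ge 2^{r-1}$); your proof is correct precisely on that range, and the discrepancy lies in the statement rather than in your argument.
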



As an application, we propose the computation of some Chern classes. The terminology in the following theorem will be explained in the text.

\begin{thm} \label{thm-chern-intro}
Let~$G$ be a cyclic group of order~$p^r$, where~$p$ is an odd prime. Let~$X$ be a transitive~$G$-set, and let~$V=\C[X]$ be the corresponding permutation representation of~$G$.

If~$X$ is not free, that is if~$V$ is not the regular representation of~$G$, then the Chern classes of~$V$ all vanish.

If~$V$ is the regular representation, one has~$c_k(V) \ne 0$ if and only if~$p-1$ divides~$k$.
\end{thm}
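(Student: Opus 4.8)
The plan is to reduce the computation of the total Chern class of a permutation representation $V = \C[X]$ to an explicit polynomial identity in the cohomology ring of $G$, and then to feed the resulting coefficients into Theorem~\ref{thm-main}. Recall that for a cyclic group $G$ of order $p^r$ one has $H^*(G;\z) = \z[u]/(p^r u)$ with $u$ in degree $2$, and that the first Chern class sets up an isomorphism between the group $\mathrm{Hom}(G,\C^\times)$ of one-dimensional characters and $H^2(G;\z)$. Write $\chi$ for a fixed generator of the character group, so $c_1(\chi) = u$ (after a choice), and then $c_1(\chi^j) = ju$ for every $j$. The first step is to decompose $V$ as a sum of one-dimensional characters: if $X$ is transitive then $X \cong G/H$ for a unique subgroup $H$, say of index $d = p^s$ (so $|H| = p^{r-s}$), and $\C[G/H] = \mathrm{Ind}_H^G \mathbf{1}$ decomposes as $\bigoplus_{j=0}^{d-1} \chi^{j p^{r-s}}$, the sum of all characters trivial on $H$. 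By Whitney's formula, $c(V) = \prod_{j=0}^{d-1}(1 + j p^{r-s} u)$.

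The second step is to expand this product and read off $c_k(V)$ as the coefficient of $u^k$. Here is where the Stirling numbers enter: the generating identity $\prod_{j=0}^{d-1}(1 + jx) = \sum_k c(d,k)\,x^{d-k}$ (with the convention $c(d,0)=0$ for $d\ge 1$, $c(d,d)=1$) is precisely the defining property of the signless Stirling numbers of the first kind. Substituting $x = p^{r-s}u$ gives
\[
c(V) = \sum_{k} c(d, d-k)\, p^{(r-s)k}\, u^{k},
\]
so that $c_k(V) = c(d, d-k)\, p^{(r-s)k}\, u^k$ inside $H^{2k}(G;\z)$. Since $u$ has additive order exactly $p^r$ in $H^{2k}(G;\z)$, the class $c_k(V)$ vanishes if and only if $p^r \mid c(d,d-k)\, p^{(r-s)k}$, i.e.\ if and only if
\[
v_p\!\left(c(d, d-k)\right) \;\ge\; r - (r-s)k \,=\, s + (r-s)(1-k).
\]

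For the non-free case $s < r$, i.e.\ $d = p^s < p^r$, the third step is the observation that $(r-s)k$ already absorbs enough: when $k \ge 1$ we have $(r-s)k \ge r-s \ge 1$, and a short induction (or a direct count, since $v_p(j) \le s < r$ for $1 \le j \le d-1$ forces the products $\prod j p^{r-s}$ to be divisible by a large power of $p$) shows $v_p(c(d,d-k)) + (r-s)k \ge r$ for all $k \ge 1$; hence every $c_k(V)$ vanishes. (One should double-check the edge behaviour $k$ near $d$, but there the binomial-coefficient estimates are only more favourable.) This is the part I expect to be slightly fiddly rather than genuinely hard. For the free case $s = r$, $d = p^r$, the exponent $(r-s)k$ is zero and the criterion collapses to: $c_k(V) \ne 0$ if and only if $v_p\!\left(c(p^r, p^r - k)\right) < r$. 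Now apply Corollary~\ref{coro-vp-stirling-odd} with $n = p^r$ and the Stirling argument $p^r - k$ in place of $k$: writing $p^r - (p^r - k) = k$, we get $v_p(c(p^r, p^r-k)) \ge r$ exactly when $p-1 \nmid k$, and $v_p(c(p^r,p^r-k)) = r - 1 - v_p(\ell) \le r-1 < r$ when $k = \ell(p-1)$. Therefore $c_k(V)\ne 0$ precisely when $p-1 \mid k$, which is the assertion.

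The main obstacle, as usual in this kind of statement, is bookkeeping: pinning down the exact decomposition of $\mathrm{Ind}_H^G\mathbf 1$ and the precise normalization of $c_1$ on characters (so that the $p$-adic valuation estimates are not off by powers of $p$), and handling the non-free case uniformly in $s$ without a messy case split. None of this is deep; the arithmetic content is entirely outsourced to Theorem~\ref{thm-main} and Corollary~\ref{coro-vp-stirling-odd}. If one wanted a cleaner exposition, one could first treat the regular representation in detail and then deduce the non-free case by noting that $\C[G/H]$ is pulled back from the quotient $G/H$, whose order is $p^s$, so that its Chern classes land in the image of $H^*(G/H)\to H^*(G)$ and one re-runs the same computation with $p^s$ in place of $p^r$ --- making the vanishing transparent once one checks that the relevant Stirling numbers for the smaller group, multiplied by the inflation factor $p^{r-s}$ raised to the appropriate power, are divisible by $p^r$.
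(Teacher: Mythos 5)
Your reduction to the identity $c(V)=\prod_{j=0}^{d-1}(1+jp^{r-s}u)$ is sound, and your treatment of the free case is correct and essentially the paper's: there $c_k(V)=c(p^r,p^r-k)\,u^k$ and Corollary~\ref{coro-vp-stirling-odd} gives non-vanishing exactly when $p-1\mid k$. The gap is in the non-free case. There the entire content of the statement is the inequality $v_p\bigl(c(p^s,p^s-k)\bigr)+(r-s)k\ge r$ for all $k\ge 1$, and the justification you offer does not establish it: the ``direct count'' idea, that $v_p(j)\le s$ forces the monomials $j_{i_1}\cdots j_{i_k}\,p^{(r-s)k}$ to be divisible by a large power of $p$, is false termwise --- if the $j_i$ are units the monomial has valuation exactly $(r-s)k$, which for $k=1$ is $r-s<r$. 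The divisibility only appears after summation, i.e.\ it is a statement about the valuation of the Stirling number $c(p^s,p^s-k)$ itself, which is precisely the non-trivial arithmetic input; waving at ``a short induction'' leaves the main point unproved.

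The gap is fillable with results you already cite, in two ways. Either apply Corollary~\ref{coro-vp-stirling-odd} at level $s$: if $p-1\nmid k$ then $v_p(c(p^s,p^s-k))\ge s$, so the total valuation is at least $s+(r-s)k\ge r$; if $k=\ell(p-1)$ then $v_p(c(p^s,p^s-k))=s-1-v_p(\ell)$, and the required $(r-s)(k-1)\ge 1+v_p(\ell)$ holds because $r-s\ge1$ and $k=\ell(p-1)\ge 2p^{v_p(\ell)}\ge v_p(\ell)+2$ for $p$ odd. Or follow the route you sketch in your last sentence, which is the paper's: $\C[G/H]$ is inflated from the quotient of order $p^s$, its total Chern class in $H^*(G,\z)$ is $P_{p^s}(p^{r-s}u)$ (the same product you wrote), and Lemma~\ref{lem-p-of-pt} gives $P_{p^s}(pv)\equiv 1 \pmod{p^{s+1}}$; substituting $v=p^{r-s-1}u$ raises the valuation of the coefficient of $u^k$ to at least $s+1+(r-s-1)k\ge r$ for $k\ge1$, so all positive-degree Chern classes vanish. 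Either way the non-free case becomes a short consequence of the paper's lemmas, but as written your proposal asserts rather than proves it, and the heuristic it leans on is incorrect.
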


As we shall explain, what one needs to know in order to compute the Chern classes of {\em any} permutation representation of a cyclic group is precisely~$c(n,k)$ modulo~$n$, for various values of~$n$ and~$k$. Thus our results in this paper entirely answer the question, and the above theorem is just an example of an easy statement.

The paper is organized as follows. In section~\ref{sec-poly}, we state a few polynomial identities, which imply easily the congruences of theorem~\ref{thm-main}. What is more, these identities may be easier to commit to memory. Then in section~\ref{sec-p-odd}, we prove the identities when~$p$ is odd, except for a little result of commutative algebra (to put things emphatically) which is established in section~\ref{sec-cyclo}. It is in section~\ref{sec-p2} that we turn to the case~$p=2$, which is relatively easy once the statement is known. Finally in section~\ref{sec-chern} we present the applications to Chern classes.      

\section{Polynomial identities} \label{sec-poly}

Throughout the paper we shall use the polynomials 
\[ P_n(t) = \prod_{k=0}^{n-1} \, (1 + kt) \in \z[t] \, .   \]
We shall take as our definition of the numbers~$c(n,k)$ the identity 
\[ P_n(t) = \sum_{k=0}^n c(n,n-k) t^k  = \sum_{k=0}^n c(n, k) t^{n-k}\, .   \]
See Proposition 1.3.7 in~\cite{stan} (and the rest of \S1.3 there) for a proof that this agrees with other classical definitions.

One of our main results is the following.

\begin{thm} \label{thm-poly-odd-prime}
Let~$p$ be an odd prime and~$r > 0$. Then 
\[ P_{p^r}(t) \equiv (1 - t^{p-1})^{p^{r-1}} \mo{p^r} \, .   \]
\end{thm}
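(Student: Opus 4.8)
The plan is to work modulo $p$ first, then lift to modulo $p^r$ by a careful analysis of the $p$-adic valuations of the coefficients. Recall that $P_{p^r}(t) = \prod_{k=0}^{p^r - 1}(1 + kt)$. Modulo $p$, the residues $k \bmod p$ cycle through $0, 1, \dots, p-1$ exactly $p^{r-1}$ times as $k$ ranges over $0, \dots, p^r - 1$, so
\[ P_{p^r}(t) \equiv \left( \prod_{j=0}^{p-1}(1 + jt) \right)^{p^{r-1}} = P_p(t)^{p^{r-1}} \mo{p}. \]
Now $P_p(t) = \prod_{j=0}^{p-1}(1+jt)$; setting $t=0$ gives the factor $1$ from $j=0$, and $\prod_{j=1}^{p-1}(1+jt) \equiv 1 - t^{p-1} \pmod p$ by comparing with $\prod_{j=1}^{p-1}(t - j) \equiv t^{p-1} - 1$ in $\f_p[t]$ (the nonzero elements of $\f_p$ are exactly the roots of $x^{p-1}-1$), after the substitution $j \mapsto j^{-1}$ and homogenizing. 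Hence $P_p(t) \equiv 1 - t^{p-1} \pmod p$, and therefore $P_{p^r}(t) \equiv (1 - t^{p-1})^{p^{r-1}} \pmod p$, which is the statement mod $p$.

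To get the full congruence mod $p^r$, I would induct on $r$. Write $P_{p^r}(t) = \prod_{i=0}^{p-1} Q_i(t)$ where $Q_i(t) = \prod_{k \equiv i \,(p^{r-1})}(1 + kt)$ — better, group the product according to the residue of $k$ modulo $p^{r-1}$: for each residue class $a \in \{0, \dots, p^{r-1}-1\}$ the relevant $k$'s are $a, a + p^{r-1}, \dots, a + (p-1)p^{r-1}$, so
\[ P_{p^r}(t) = \prod_{a=0}^{p^{r-1}-1} \prod_{j=0}^{p-1}\bigl(1 + (a + j p^{r-1})t\bigr). \]
Expanding the inner product in powers of $p^{r-1} t$ and discarding terms that are $\equiv 0 \pmod{p^r}$ (anything with two or more factors of $p^{r-1}$, since $2(r-1) \ge r$ for $r \ge 2$) reduces the inner product modulo $p^r$ to $\prod_{j=0}^{p-1}(1 + jt)$ times a correction linear in $p^{r-1}t$ coming from $a$. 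This is where the main computation lives: one must show that the product over $a$ of all these corrections, together with $\prod_a P_p(t)^{?}$, collapses to $(1 - t^{p-1})^{p^{r-1}}$ modulo $p^r$. The cleanest route is probably to not expand by brute force but to relate $P_{p^r}(t)$ to $P_{p^{r-1}}(t)$ directly: I expect an identity of the shape $P_{p^r}(t) \equiv P_{p^{r-1}}(t)^{p} \cdot (\text{something}) \pmod{p^r}$, or a multiplicative recursion, and then $(1-t^{p-1})^{p^{r-2}}$ raised to the $p$-th power is already $(1-t^{p-1})^{p^{r-1}}$, so the correction term had better be $\equiv 1$ modulo $p^r$ — and proving that correction vanishes is the crux.

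I suspect the authors instead factor over the $p$-adic integers or over a cyclotomic extension: the hint in the excerpt is that section \ref{sec-cyclo} contains "a little result of commutative algebra," suggesting that $1 + kt$ for $k$ running over a full set of residues mod $p^r$ is analyzed by grouping the $k$'s that are $p$-adically close, using that $1 + (a + bp^{r-1})t \equiv (1+at)(1 + \text{something} \cdot p^{r-1} t) \pmod{p^r}$ and that the "something" averages out. Concretely, the key lemma is likely that $\prod_{b=0}^{p-1}(1 + (a+bp^s)t) \equiv \prod_{b=0}^{p-1}(1 + b p^s t) \cdot (1+at)^p \pmod{p^{s+1}}$ or a variant, which telescopes. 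The main obstacle, then, is not the mod-$p$ reduction (routine) but controlling the first-order correction terms: showing that the sum $\sum_a$ (over a residue class) of the linear-in-$p^{r-1}$ corrections is divisible by $p$, so that it contributes nothing modulo $p^r$. This amounts to a statement like $\sum_{a=0}^{p^{r-1}-1} \frac{a}{1+at} \equiv 0$ in an appropriate sense modulo $p$, which should follow from a symmetry/pairing argument ($a \leftrightarrow -a$ or $a \leftrightarrow p^{r-1}-a$) combined with the fact that $P_{p-1}(t) \equiv 1 - t^{p-1}$ has the right derivative behavior. Once that cancellation is in hand, the induction closes and the theorem follows.
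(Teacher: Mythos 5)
Your mod-$p$ computation is fine, but it only yields the congruence modulo $p$, which is the easy part; the entire content of the theorem is the lift to modulo $p^r$, and that part of your proposal is not a proof. After the grouping $k=a+jp^{r-1}$ you assert that the inner product reduces to ``$\prod_{j=0}^{p-1}(1+jt)$ times a correction linear in $p^{r-1}t$ coming from $a$'' --- this mixes up the two variables: for fixed $a$ the inner product is $\prod_{j=0}^{p-1}\bigl(1+at+jp^{r-1}t\bigr)$, whose main term is $(1+at)^p$ and whose first-order correction is $p^{r-1}t\,(1+at)^{p-1}\sum_{j=0}^{p-1}j$; no sum over $a$ enters at this stage. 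From there on the argument is explicitly conjectural (``I expect'', ``should follow'', ``is the crux''): the cancellation on which everything depends is never established, and the quantity you propose to kill, $\sum_{a=0}^{p^{r-1}-1} a/(1+at)$ modulo $p$, is not the relevant one. Since the statement mod $p^r$ is exactly what remains after the routine mod-$p$ reduction, this is a genuine gap, not a detail.

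It is worth saying that your route can be completed, and quite simply: the needed cancellation is just $\sum_{j=0}^{p-1}j=\tfrac{p(p-1)}{2}\equiv 0 \pmod p$, which holds because $p$ is odd --- and oddness must enter somewhere, since the shape of the answer is different for $p=2$ (Theorem~\ref{thm-poly-p2}). With it, for $r\ge 2$ one gets $\prod_{j=0}^{p-1}\bigl(1+(a+jp^{r-1})t\bigr)\equiv (1+at)^p \pmod{p^r}$ (the quadratic terms carry $p^{2(r-1)}$), hence $P_{p^r}(t)\equiv P_{p^{r-1}}(t)^p \pmod{p^r}$, and the induction closes because $(X+p^{r-1}A)^p\equiv X^p \pmod{p^r}$. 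That completed argument would in fact be more elementary than the paper's, which instead splits off the factors with $p\mid k$ via Lemma~\ref{lem-p-of-pt}, groups the remaining units of $\z/p^{r+1}\z$ into cosets of the subgroup of order $p-1$, and invokes the ring identity of Lemma~\ref{lem-an-minus-bn} proved through cyclotomic polynomials in Section~\ref{sec-cyclo}. But as submitted, your proposal stops precisely at the step that constitutes the theorem, so it cannot be accepted as a proof.
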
 

(Here unsurprisingly we write~$A(t) \equiv B(t) \mo{N}$ to indicate that the difference~$A(t) - B(t)$ is a polynomial whose coefficients are all divisible by~$N$.)

We will prove this in the next section. However we can at once deal with some consequences:

\begin{coro}
Let~$p$ be an odd prime and suppose that~$r= v_p(n) > 0$. Then 
\[ P_n(t) \equiv (1 - t^{p-1})^{\frac{n} {p}} \mo{p^r}\, .  \]
As a result, the congruences announced in theorem~\ref{thm-main} hold. 
\end{coro}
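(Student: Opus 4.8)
The plan is to deduce the corollary from Theorem~\ref{thm-poly-odd-prime} by a multiplicativity argument on the polynomials $P_n(t)$, followed by extracting coefficients. Write $n = p^r m$ with $p \nmid m$. The first observation is that the product defining $P_n$ factors according to residues: $P_n(t) = \prod_{k=0}^{n-1}(1+kt)$, and the factors $1+kt$ with $p \mid k$ are exactly $\prod_{j=0}^{n/p - 1}(1 + pjt)$, while the factors with $p \nmid k$ can be collected into blocks. I would first show that modulo $p^r$ the factors $1+kt$ with $p \nmid k$ contribute trivially; more precisely, I expect that $\prod_{k : p \nmid k, \, 0 \le k < n}(1+kt) \equiv 1 \pmod{p^r}$ is \emph{not} quite right, so instead the cleaner route is: group the range $[0,n)$ into $m$ consecutive blocks of length $p^r$, so that $P_n(t) = \prod_{i=0}^{m-1} \prod_{k=0}^{p^r-1}(1 + (ip^r + k)t)$, and argue that each inner block is $\equiv P_{p^r}(t) \pmod{p^r}$ because $ip^r + k \equiv k \pmod{p^r}$ and the coefficients are polynomial in the entries.

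Given that reduction, Theorem~\ref{thm-poly-odd-prime} yields $\prod_{k=0}^{p^r-1}(1+(ip^r+k)t) \equiv (1-t^{p-1})^{p^{r-1}} \pmod{p^r}$ for each $i$, and multiplying the $m$ blocks gives
\[
P_n(t) \equiv \left( (1-t^{p-1})^{p^{r-1}} \right)^m = (1-t^{p-1})^{p^{r-1}m} = (1-t^{p-1})^{n/p} \pmod{p^r},
\]
which is the asserted polynomial congruence. To get the numerical congruences of Theorem~\ref{thm-main}, I would expand the right-hand side by the binomial theorem: $(1-t^{p-1})^{n/p} = \sum_{\ell} (-1)^\ell \binom{n/p}{\ell} t^{\ell(p-1)}$. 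Comparing with $P_n(t) = \sum_k c(n,k) t^{n-k}$, the coefficient of $t^{n-k}$ on the left is $c(n,k)$, and on the right it is $(-1)^\ell \binom{n/p}{\ell}$ when $n-k = \ell(p-1)$ and $0$ when $p-1 \nmid n-k$. This is exactly the statement of the first part of Theorem~\ref{thm-main}.

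The main obstacle is the block-multiplicativity step: one must be careful that reducing each factor $1 + (ip^r+k)t$ to $1 + kt$ modulo $p^r$ is legitimate at the level of the \emph{product}, i.e. that if $a_k \equiv b_k \pmod{p^r}$ for all $k$ then $\prod_k (1+a_k t) \equiv \prod_k (1 + b_k t) \pmod{p^r}$. This is clear since the coefficients of the product are polynomials (with integer coefficients) in the $a_k$, and congruences are preserved under such polynomial operations; but it is worth stating cleanly. Everything after that — invoking Theorem~\ref{thm-poly-odd-prime} on each block, multiplying, and reading off coefficients — is routine. One should also remark that the $p=2$ cases of Theorem~\ref{thm-main} are not covered here and are treated separately in section~\ref{sec-p2}.
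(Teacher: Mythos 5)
Your proposal is correct and is essentially the paper's own argument: the paper also writes $n = p^r m$ and uses the remark that $P_n(t) \equiv P_{p^r}(t)^m \pmod{p^r}$ (which is exactly your block decomposition, with the mod-$p^r$ reduction of factors implicit), then invokes Theorem~\ref{thm-poly-odd-prime} and reads off coefficients. Your version merely spells out the justification of the block-multiplicativity step that the paper leaves as a ``simple remark.''
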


\begin{proof}[Proof of the corollary]
Clearly the congruence of polynomials implies the congruences between the coefficients. From theorem~\ref{thm-poly-odd-prime} we have 
\[ P_{p^r}(t) \equiv (1 - t^{p-1})^{p^{r-1}} \mo{p^r} \, .   \]
The corollary then follows from the simple remark that 
\[ P_n(t) \equiv P_{p^r}(t)^m \mo{p^r} \, ,   \]
where we have written~$n= p^r m$.
\end{proof}

When~$p=2$, the statement is a little different.

\begin{thm} \label{thm-poly-p2}
Let~$r \ge 3$. Then 
\begin{align*}
P_{2^r} (t) & \equiv (1-t^2)^{2^{r-2}} + 2^{r-1} (t+t^2+t^{2^{r-1} +1 } + t^{2^{r-1} +2 }) \mo{2^r} \\
       & \equiv (1 - t^2)^{2^{r-2}}  (1 + 2^{r-1} t(t+1)) \mo{2^r} \, . 
\end{align*}
On the other hand 
\[ P_2(t) = t+1  \]
and 
\[ P_4(t) \equiv (1+2t)(1 - t^2) \mo{4} \, .   \]
\end{thm}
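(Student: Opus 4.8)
The plan is to mirror the structure of the odd-prime case as closely as possible, but to keep careful track of the "extra" term that appears because $-1$ is not a primitive root modulo $2^r$ once $r \ge 3$. First I would record the easy cases directly: $P_2(t) = 1+t$ is immediate from the definition $P_2(t) = (1+0\cdot t)(1+1\cdot t)$, and $P_4(t) = (1)(1+t)(1+2t)(1+3t)$, which one expands and reduces mod $4$ to get $(1+2t)(1-t^2)$ after noticing $1+3t \equiv 1-t \pmod 4$ up to the cross terms, which are divisible by $4$. So the content is the case $r \ge 3$.

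For the main case I would argue by induction on $r$, using the multiplicative relation $P_{2^{r}}(t) = P_{2^{r-1}}(t) \cdot \prod_{k=2^{r-1}}^{2^{r}-1}(1+kt)$. The key observation is that for $2^{r-1} \le k \le 2^{r}-1$ we can write $k = 2^{r-1} + j$ with $0 \le j \le 2^{r-1}-1$, so the second factor is $\prod_{j=0}^{2^{r-1}-1}(1 + (2^{r-1}+j)t) = \prod_j \big((1+jt) + 2^{r-1}t\big)$. Expanding this product and discarding all terms with two or more factors of $2^{r-1}$ (which vanish mod $2^{r}$ since $2(r-1) \ge r$ for $r \ge 2$), one gets
\[
\prod_{j=0}^{2^{r-1}-1}(1+(2^{r-1}+j)t) \equiv P_{2^{r-1}}(t) + 2^{r-1} t \sum_{j=0}^{2^{r-1}-1} \frac{P_{2^{r-1}}(t)}{1+jt} \mo{2^{r}} \, .
\]
Now modulo $2$ one has $P_{2^{r-1}}(t) \equiv (1+t^2)^{2^{r-2}} = (1-t^2)^{2^{r-2}}$ already from Theorem \ref{thm-poly-p2} at the previous level (or from an easy direct computation over $\f_2$), and more importantly the logarithmic-derivative-type sum $\sum_j (1+jt)^{-1}$ can be evaluated mod $2$: reducing the residues $j$ mod $2$, exactly half are even and half are odd, so mod $2$ this sum becomes $2^{r-2}\big(1 + (1+t)^{-1}\big) \equiv 0$ unless $2^{r-2}$ is odd, i.e. $r=3$... wait — one must be more careful here and instead track the sum $2^{r-1} t \sum_j P_{2^{r-1}}(t)/(1+jt)$ as a whole modulo $2^{r}$, which only requires knowing $\sum_j P_{2^{r-1}}(t)/(1+jt)$ modulo $2$. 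I would compute that residue-sum mod $2$ by pairing $j$ with $j + 2^{r-2}$ and using $P_{2^{r-1}}(t) \equiv (1-t^2)^{2^{r-2}} \pmod 2$ to see that the surviving contribution is $t + t^2 + t^{2^{r-1}+1} + t^{2^{r-1}+2}$ up to a multiple of $2$; combined with the $(1-t^2)^{2^{r-2}}$ coming from $P_{2^{r-1}}(t)^2$-type bookkeeping and the binomial identity $(1-t^2)^{2^{r-2}} = \big((1-t^2)^{2^{r-3}}\big)^2$, this should close the induction and deliver the stated formula. Finally I would check that the two displayed expressions for $P_{2^r}(t)$ agree mod $2^r$ by expanding $(1-t^2)^{2^{r-2}}\cdot 2^{r-1}t(t+1)$ and noting $(1-t^2)^{2^{r-2}} \equiv 1 + t^{2^{r-1}} \pmod 2$ — wait, that's $(1-t^2)^{2^{r-2}} \equiv (1+t^2)^{2^{r-2}} \equiv 1 + t^{2^{r-1}} \pmod 2$ by the Frobenius — so $2^{r-1}t(t+1)(1-t^2)^{2^{r-2}} \equiv 2^{r-1}(t+t^2)(1+t^{2^{r-1}}) = 2^{r-1}(t + t^2 + t^{2^{r-1}+1} + t^{2^{r-1}+2}) \pmod{2^r}$, which is exactly the correction term, so the two forms match.

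The base case $r=3$ I would verify by hand: $P_8(t) = \prod_{k=0}^{7}(1+kt)$, reduce mod $8$, and compare with $(1-t^2)^2 + 4(t + t^2 + t^5 + t^6) = 1 - 2t^2 + t^4 + 4t + 4t^2 + 4t^5 + 4t^6$. This is a finite check and presents no difficulty.

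The main obstacle, as in the odd case, is the evaluation of the "error sum" $\sum_{j=0}^{2^{r-1}-1} P_{2^{r-1}}(t)/(1+jt)$ modulo $2$ — one needs a clean way to see that its reduction mod $2$ is precisely $(t+t^2+t^{2^{r-1}+1}+t^{2^{r-1}+2})/2^{r-1}$-shaped, i.e. $\equiv t + t^2 + t^{2^{r-1}+1} + t^{2^{r-1}+2}$ times the appropriate power of two; I expect this is where the real work lies, and where appealing to the factorization $(1-t^2)^{2^{r-2}} \equiv 1 + t^{2^{r-1}} \pmod 2$ together with a pairing argument on the index $j$ is essential. Everything else — the two base cases and the verification that the two stated closed forms coincide — is routine.
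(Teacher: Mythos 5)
Your reduction is set up correctly -- the split $P_{2^r}(t)=P_{2^{r-1}}(t)\prod_{j=0}^{2^{r-1}-1}\bigl((1+jt)+2^{r-1}t\bigr)$ and the first-order expansion modulo $2^{2(r-1)}$ are fine -- but the step you defer as ``where the real work lies'' is precisely the point, and your prediction of what it yields is wrong, so as written there is a genuine gap. The sum $\Sigma=\sum_{j}\prod_{j'\neq j}(1+j't)$ is easy to evaluate mod $2$: a term with $j$ even reduces to $(1+t)^{2^{r-2}}$ and a term with $j$ odd to $(1+t)^{2^{r-2}-1}$, and since there are $2^{r-2}$ indices of each parity and $2^{r-2}$ is even for \emph{every} $r\ge 3$ (your ``unless $r=3$'' caveat is spurious -- your first instinct was right with no exception), one gets $\Sigma\equiv 0 \pmod 2$. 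Hence $2^{r-1}t\,\Sigma\equiv 0\pmod{2^r}$: the ``error sum'' contributes nothing, and your decomposition simply gives $P_{2^r}\equiv P_{2^{r-1}}(t)^2\pmod{2^r}$. The correction term $2^{r-1}(t+t^2+t^{2^{r-1}+1}+t^{2^{r-1}+2})$ therefore does \emph{not} arise from the sum you propose to analyse by pairing $j$ with $j+2^{r-2}$; it arises from the cross term when you square the inductive expression. Writing, for $r\ge 4$, $P_{2^{r-1}}=(1-t^2)^{2^{r-3}}+2^{r-2}Q_{r-1}+2^{r-1}R_{r-1}$ with $Q_{r-1}=t+t^2+t^{2^{r-2}+1}+t^{2^{r-2}+2}$ (note also that $P_{2^{r-1}}\equiv(1-t^2)^{2^{r-3}}\pmod 2$, not $(1-t^2)^{2^{r-2}}$ as you state), one finds
\[ P_{2^{r-1}}(t)^2\equiv(1-t^2)^{2^{r-2}}+2^{r-1}(1-t^2)^{2^{r-3}}Q_{r-1}(t)\pmod{2^r}, \]
and $(1-t^2)^{2^{r-3}}Q_{r-1}\equiv(1+t^{2^{r-2}})(t+t^2+t^{2^{r-2}+1}+t^{2^{r-2}+2})\equiv t+t^2+t^{2^{r-1}+1}+t^{2^{r-1}+2}\pmod 2$. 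So your ``$P_{2^{r-1}}(t)^2$-type bookkeeping'' cannot contribute only $(1-t^2)^{2^{r-2}}$: it produces the whole answer, while the piece you earmark as the real work is identically zero mod $2^r$.

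Once this is fixed, your route does close, and arguably more cleanly than the paper's: $\Sigma\equiv 0$ gives $P_{2^r}\equiv P_{2^{r-1}}^2\pmod{2^r}$, squaring the inductive hypothesis (which is only needed mod $2^{r-1}$) gives the stated formula for $r\ge 4$, and the base case $r=3$ is your finite check of $P_8$. For comparison, the paper instead shifts the index symmetrically to obtain $P_{2^r}(t)\equiv P_{2^{r-1}}(t)\,P_{2^{r-1}}(-t)\,(1-2^{r-1}t)\pmod{2^r}$ and runs the induction through the decomposition $P_{2^{r-1}}=(1-t^2)^{2^{r-3}}+2^{r-2}Q_{r-1}+2^{r-1}R_{r-1}$, isolating $Q_r$. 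Your handling of $P_2$, $P_4$, the $r=3$ base case, and the equivalence of the two displayed closed forms is fine; the missing and mispredicted evaluation described above is the one substantive defect.
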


Note that the two expressions for~$P_{2^r}(t)$ are obviously congruent mod $2^r$. In section~\ref{sec-p2} we shall prove the first, while we shall explore the consequences of the second now.

\begin{coro}
Let~$r= v_2(n)$. If~$r \ge 3$ then 
\[ P_n(t) \equiv (1 - t^2)^{\frac{n} {4}} (1 + \frac{n} {2} t (t+1))  \mo{2^r}\, .  \]
For~$r= 1$ one has 
\[ P_n(t) \equiv (1+t)^{\frac{n} {2}} \mo{2} \, ,   \]
Finally for~$r=2$ one has 
\[ P_n(t) \equiv (1 + \frac{n} {2} t)(1 - t^2)^{\frac{n} {4}} \mo{4} \, .   \]
As a result, the congruences announced in theorem~\ref{thm-main} hold. 
\end{coro}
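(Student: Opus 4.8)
The plan is to follow, essentially word for word, the proof of the preceding corollary for odd primes. Write $n = 2^r m$ (so $m$ is odd, since $r = v_2(n)$). Beyond Theorem~\ref{thm-poly-p2}, the only ingredient is the elementary congruence
\[ P_n(t) \equiv P_{2^r}(t)^m \mo{2^r} \, . \]
This is proved exactly as the corresponding remark above with $p$ replaced by $2$: each $k$ with $0 \le k < n$ may be written uniquely as $k = 2^r q + i$ with $0 \le i < 2^r$ and $0 \le q < m$, and then $1 + kt \equiv 1 + it \mo{2^r}$, so the $n$ factors of $P_n(t)$ split into $m$ blocks each congruent to $P_{2^r}(t)$ modulo $2^r$.

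Granting this, the three cases are pure formal manipulation. For $r = 1$, from $P_2(t) = 1 + t$ we get $P_n(t) \equiv (1+t)^m = (1+t)^{n/2} \mo{2}$. For $r = 2$, raising $P_4(t) \equiv (1+2t)(1-t^2) \mo{4}$ to the $m$-th power and using $(2t)^2 = 4t^2 \equiv 0 \mo{4}$, the binomial theorem gives $(1+2t)^m \equiv 1 + 2mt = 1 + \tfrac{n}{2} t \mo{4}$, hence $P_n(t) \equiv (1 + \tfrac{n}{2} t)(1-t^2)^{n/4} \mo{4}$. For $r \ge 3$, we start from the second expression of Theorem~\ref{thm-poly-p2}, namely $P_{2^r}(t) \equiv (1-t^2)^{2^{r-2}}\bigl(1 + 2^{r-1}t(t+1)\bigr) \mo{2^r}$, and raise to the $m$-th power. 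Since $\bigl(2^{r-1}t(t+1)\bigr)^j = 2^{j(r-1)}t^j(t+1)^j$ with $j(r-1) \ge 2(r-1) \ge r$ whenever $j \ge 2$ (using $r \ge 2$), all terms of the binomial expansion past the linear one vanish modulo $2^r$; thus $\bigl(1 + 2^{r-1}t(t+1)\bigr)^m \equiv 1 + m 2^{r-1} t(t+1) = 1 + \tfrac{n}{2} t(t+1) \mo{2^r}$, and since $m 2^{r-2} = n/4$ we obtain $P_n(t) \equiv (1-t^2)^{n/4}\bigl(1 + \tfrac{n}{2} t(t+1)\bigr) \mo{2^r}$.

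Finally, the congruences of Theorem~\ref{thm-main} drop out by comparing coefficients: recall that $c(n,k)$ is the coefficient of $t^{n-k}$ in $P_n(t)$, and expand $(1-t^2)^{n/4} = \sum_{\ell \ge 0} (-1)^\ell \binom{n/4}{\ell} t^{2\ell}$. For $r \ge 3$, writing $1 + \tfrac{n}{2}t(t+1) = 1 + \tfrac{n}{2}t + \tfrac{n}{2}t^2$, the coefficient of $t^{2\ell}$ in the product is $(-1)^\ell\binom{n/4}{\ell} + \tfrac{n}{2}(-1)^{\ell-1}\binom{n/4}{\ell-1}$, and the coefficient of $t^{2\ell+1}$ is $\tfrac{n}{2}(-1)^\ell\binom{n/4}{\ell}$ --- precisely the two formulas stated. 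The cases $r = 1$ and $r = 2$ are read off in the same way, even more quickly. There is no real obstacle in any of this: once Theorem~\ref{thm-poly-p2} is available the argument is entirely formal, and the only spots calling for a moment's care are the collapse of the higher-order terms in the binomial expansions (valid because $r \ge 2$) and the routine bookkeeping in the coefficient comparison.
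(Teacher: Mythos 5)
Your proof is correct and follows essentially the same route as the paper: write $n=2^r m$, use $P_n(t)\equiv P_{2^r}(t)^m \mo{2^r}$ together with Theorem~\ref{thm-poly-p2}, and collapse the binomial expansion via $(1+2^{r-1}a)^m\equiv 1+2^{r-1}ma \mo{2^r}$ for $r\ge 2$. The final coefficient comparison yielding the congruences of Theorem~\ref{thm-main} is also carried out correctly.
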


\begin{proof}
Write~$n= 2^r m$. In each case, it is a simple matter of writing 
\[ P_{2^rm} (t) \equiv P_{2^r}(t)^m   \mo{2^r}  \]
and appealing to theorem~\ref{thm-poly-p2}. One uses several times that 
\[ (1 + 2^{r-1}a)^m \equiv 1 + 2^{r-1} ma \mo{2^r}   \]
when~$r \ge 2$.
\end{proof}

\section{Odd primes} \label{sec-p-odd}

In this section we consider a fixed odd prime~$p$, and turn to the proof of the identity 
\[ P_{p^r} (t) \equiv (1 - t^{p-1})^{p^{r-1}} \mo{p^r} \tag{$\dagger$}  \]
for~$r \ge 1$, which is the content of theorem~\ref{thm-poly-odd-prime}. First we note the following consequence when we replace~$t$ by~$pt$:

\begin{lem} \label{lem-p-of-pt}
Suppose~$(\dagger)$ holds for a certain value of~$r$. Then 
\[ P_{p^r}(pt) \equiv 1 \mo{p^{r+1}} \, .  \]
\end{lem}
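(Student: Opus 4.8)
The plan is to substitute $t \mapsto pt$ into the assumed identity $(\dagger)$ and keep careful track of how the $p$-adic valuations improve under this operation.

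First I would record the following elementary principle: if $A(t)$ and $B(t)$ have integer coefficients, if $A(t) \equiv B(t) \mo{p^r}$ coefficientwise, and if $A(0) = B(0)$, then $A(pt) \equiv B(pt) \mo{p^{r+1}}$. Indeed, the coefficient of $t^j$ in $A(pt) - B(pt)$ is $p^j$ times the coefficient of $t^j$ in $A(t) - B(t)$; for $j \ge 1$ this is divisible by $p^{j+r}$, hence by $p^{r+1}$, while for $j = 0$ it equals $A(0) - B(0) = 0$. Applying this to $(\dagger)$ — and noting $P_{p^r}(0) = 1 = (1 - 0^{p-1})^{p^{r-1}}$ — yields
\[ P_{p^r}(pt) \equiv \bigl(1 - p^{p-1}t^{p-1}\bigr)^{p^{r-1}} \mo{p^{r+1}} \, . \]

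It then remains to see that the right-hand side is itself $\equiv 1 \mo{p^{r+1}}$. For this I would use the standard fact that if $Q(t) \in \z[t]$ satisfies $Q(t) \equiv 1 \mo{p^s}$ with $s \ge 1$, then $Q(t)^p \equiv 1 \mo{p^{s+1}}$: writing $Q = 1 + p^s R$ with $R \in \z[t]$ and expanding by the binomial theorem, the term with $i = 1$ contributes $p^{s+1} R$, and every term with $i \ge 2$ has valuation at least $si \ge s+1$. Starting from $1 - p^{p-1} t^{p-1} \equiv 1 \mo{p^{p-1}}$ and iterating this $r-1$ times gives
\[ \bigl(1 - p^{p-1}t^{p-1}\bigr)^{p^{r-1}} \equiv 1 \mo{p^{\,(p-1)+(r-1)}} \, . \]
Since $p$ is odd, $(p-1) + (r-1) = p + r - 2 \ge r+1$, so this holds a fortiori mod $p^{r+1}$; combining with the displayed congruence above finishes the proof.

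The argument is essentially routine; the only points requiring a little care are the bookkeeping of valuations under $t \mapsto pt$ (it is the extra factor $p^j$ on the degree-$j$ coefficient that does the work) and the observation that the hypothesis $p \ge 3$ is exactly what makes $p - 1 \ge 2$, so that a single application of the substitution already buys the extra power of $p$. This is also why one should not expect a clean analogue of the lemma for $p = 2$.
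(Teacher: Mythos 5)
Your proof is correct. The first half — substituting $t \mapsto pt$ and noting that each coefficient of the difference gains an extra factor $p^j$, with the constant terms agreeing exactly — is the same mechanism as the paper's (there it is phrased as $P_{p^r}(t) = (1-t^{p-1})^{p^{r-1}} + p^r A(t)$ with $A(0)=0$, hence $p^r A(pt) \equiv 0 \mo{p^{r+1}}$). Where you diverge is in the treatment of the remaining term $\bigl(1 - p^{p-1}t^{p-1}\bigr)^{p^{r-1}}$: the paper shows each coefficient is divisible by $p^{r+1}$ by invoking the explicit valuation $v_p\bigl[\binom{p^{r-1}}{\ell}\bigr] = r-1-v_p(\ell)$ (Kummer's theorem) together with the elementary inequality $-a + 2p^a \ge 2$, whereas you replace this entirely by the multiplicative lifting lemma ``$Q \equiv 1 \mo{p^s}$ implies $Q^p \equiv 1 \mo{p^{s+1}}$'', iterated $r-1$ times from the starting congruence $1 - p^{p-1}t^{p-1} \equiv 1 \mo{p^{p-1}}$. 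Your route is somewhat more elementary, needing no facts about valuations of binomial coefficients, and it localizes the use of $p$ being odd in the single inequality $(p-1)+(r-1) \ge r+1$; the paper's route gives slightly finer information (the exact valuation of each term of the expansion), which is in the spirit of the corollaries it states about $v_p(c(p^r,k))$, but for the purposes of this lemma your argument is a clean and complete substitute.
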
 

\begin{proof}
Rewrite~$(\dagger)$ as 
\[ P_{p^r} (t) = (1 - t^{p-1})^{p^{r-1}} + p^r A(t) \, .   \]
Trying~$t=0$ yields~$A(0) = 0$. As a result $p^rA(pt) \equiv 0 ~ [p^{r+1}]$. Thus we need to show that the coefficients of~$ (1 - (pt)^{p-1})^{p^{r-1}}$ are all divisible by~$p^{r+1}$ (except the first), or equivalently that 
\[ v_p\left[ {p^{r-1} \choose \ell} p^{\ell(p-1)}   \right] \ge r+1  \]
for all~$1 \le \ell \le p^{r-1}$.

As recalled in the introduction, we know the~$p$-adic valuation of such binomial coefficients. If~$\ell = p^a m$, then the valuation above is~$r-1 - a + p^am(p-1) \ge r-1 -a + 2p^a$, using that~$p$ is odd. A simple study of~$x \mapsto -x + 2 p^x$ shows that~$-a + 2p^a \ge 2$, so that $r-1 -a + 2p^a \ge r+1$, as was to be shown.
\end{proof}

The next lemma is also instrumental in the proof of $(\dagger)$.

\begin{lem} \label{lem-an-minus-bn}
Let~$r\ge 1$, let~$a, b \in \z/p^r\z$, and let~$\omega$ be an element of order~$p-1$ in the multiplicative group of units $\left(\z/p^r\z\right)^\times$. Then 
\[ a^{p-1} - b^{p-1} = (a-b)(a-\omega b)(a - \omega ^2b) \cdots (a - \omega^{p-2} b) \, .   \]
The same holds if~$a$ and~$b$ are polynomials in~$\z/p^r\z[t]$.
\end{lem}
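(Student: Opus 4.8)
The plan is to reduce everything to a single polynomial identity over the ring $R = \z/p^r\z$ and then verify it by counting roots, exploiting the fact that $\omega$ has order exactly $p-1$. Introduce a new indeterminate $X$ and consider the polynomial
\[
F(X) = X^{p-1} - 1 \in R[X] \, .
\]
The key claim is that $F$ factors completely as
\[
X^{p-1} - 1 = (X-1)(X-\omega)(X-\omega^2)\cdots(X-\omega^{p-2})
\]
in $R[X]$. Granting this, the lemma follows at once: substitute $X = a/b$ when $b$ is a unit and multiply through by $b^{p-1}$; and since both sides of the desired identity are \emph{polynomial} expressions in $a$ and $b$ with coefficients in $R$, an identity that holds for all $a$ and all units $b$ in fact holds as an identity of polynomials in two variables over $R$ (the values we get already pin down every coefficient, because $R[a,b]$ injects into the product of its localizations at the units, or more concretely because a polynomial over $R$ vanishing on a large enough set of points — all pairs with $b$ a unit — must be the zero polynomial; alternatively, clear denominators symbolically as $b^{p-1}F(a/b)$ and observe this is a genuine polynomial identity once $F$ is known to factor). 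The final sentence of the lemma, about $a,b \in R[t]$, is then immediate: the identity is a formal consequence of the one-variable factorization, so it remains valid after any substitution, in particular substituting polynomials in $t$.

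It remains to prove the factorization of $X^{p-1}-1$. First I would treat $p-1$ as a unit: it is prime to $p$, hence invertible in $R$, so $F(X) = X^{p-1}-1$ and its formal derivative $F'(X) = (p-1)X^{p-2}$ generate the unit ideal away from $X=0$, i.e. $F$ is separable in the relevant sense. Concretely, the elements $1, \omega, \ldots, \omega^{p-2}$ are $p-1$ distinct roots of $F$ in $R$ (distinct because $\omega$ has order exactly $p-1$, so $\omega^i \ne \omega^j$ for $0 \le i < j \le p-2$), and I want to conclude that $\prod_{i=0}^{p-2}(X - \omega^i)$ divides $F$ and, both being monic of degree $p-1$, that they are equal. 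The one subtlety over a non-field ring: knowing $\alpha$ is a root lets us write $F(X) = (X-\alpha)G(X)$, and to peel off the next factor $(X - \omega)$ from $G$ we need $G(\omega) = 0$, which reads $(\omega - 1)^{-1}\bigl(F(\omega) - \ldots\bigr) = 0$ — fine — but peeling off \emph{all} of them successively requires that each new root, after the previous factors are removed, remains a root, and this is where one uses that the \emph{differences} $\omega^i - \omega^j$ are units in $R$.

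So the main obstacle, and the reason a separate little lemma is promised in section~\ref{sec-cyclo}, is exactly this: one must check that $\omega^i - \omega^j \in (\z/p^r\z)^\times$ whenever $i \not\equiv j \pmod{p-1}$. The cleanest route is to reduce mod $p$: in the field $\f_p = \z/p\z$, the image $\bar\omega$ still has order dividing $p-1$, and one checks it has order \emph{exactly} $p-1$ (otherwise $\omega$ would fail to have order $p-1$ in $(\z/p^r\z)^\times$, since the reduction map on unit groups has kernel a $p$-group, coprime to $p-1$); hence $\bar\omega^i - \bar\omega^j \ne 0$ in $\f_p$, so $\omega^i - \omega^j$ is a unit in $\z/p^r\z$. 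With the differences known to be units, the successive division argument goes through: after factoring out $(X-\omega^0)\cdots(X-\omega^{k-1})$ one gets $F(X) = \prod_{i<k}(X-\omega^i)\cdot G_k(X)$, evaluate at $X=\omega^k$, and divide by the unit $\prod_{i<k}(\omega^k - \omega^i)$ to conclude $G_k(\omega^k)=0$; after $p-1$ steps the remaining cofactor is a monic degree-zero polynomial, necessarily $1$, giving the claimed factorization. Everything downstream — substituting $a/b$, clearing denominators, passing to polynomials in $t$ — is then routine.
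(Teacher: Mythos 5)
Your proof is correct in substance, but it takes a genuinely different route from the paper's. The paper proves a general criterion: the identity $a^n-b^n=(a-b)(a-\omega b)\cdots(a-\omega^{n-1}b)$ holds in any commutative ring in which $\Phi_n(\omega)=0$, where $\Phi_n$ is the $n$-th cyclotomic polynomial; this is obtained by proving the identity in the integral domain $F[A,B]$ with $F=\q[\Omega]/\Phi_n(\Omega)$, lifting it to a universal identity in $\z[A,B,\Omega]$ modulo $\Phi_n$, and specializing, and a separate lemma about maximal ideals then shows $\Phi_{p-1}(\omega)=0$ in $\z/p^r\z$. You instead prove the one-variable factorization $X^{p-1}-1=(X-1)(X-\omega)\cdots(X-\omega^{p-2})$ over $R=\z/p^r\z$ directly, peeling off monic linear factors one at a time; the point that makes this legitimate over a non-domain is exactly the one you isolate, namely that $\omega^i-\omega^j$ is a unit whenever $i\not\equiv j \pmod{p-1}$, which you obtain by reduction mod $p$ (the same order computation as in the paper's final corollary, using that an element of $\z/p^r\z$ is a unit iff it is nonzero mod $p$). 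Your route is more elementary and self-contained; the paper's buys a statement valid in any ring satisfying $\Phi_n(\omega)=0$, and its counterexample in $\z/12\z$ shows what the unit-difference condition is really guarding against. One caveat in your passage from the one-variable to the two-variable identity: the parenthetical claim that a polynomial over $R$ vanishing at all pairs $(a,b)$ with $b$ a unit must be the zero polynomial is false over $\z/p^r\z$ (already $2X(X-1)$ vanishes at every point of $\z/4\z$ without being zero), so you cannot argue by interpolation of values; but the alternative you also propose --- clear denominators formally, i.e.\ substitute $X=A/B$ in $R[A,B,B^{-1}]$, where $B$ is a non-zero-divisor, and multiply by $B^{p-1}$ to obtain $A^{p-1}-B^{p-1}=\prod_{i}(A-\omega^i B)$ as an identity in $R[A,B]$ --- is correct, and evaluating that identity at arbitrary $a,b\in R$ or at polynomials in $R[t]$ finishes the proof. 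Drop the false aside and keep the homogenization argument, and your proof is complete.
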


\begin{rmk} \label{rmk-counter-example}
It is very tempting to believe that, in any commutative ring~$A$, we have 
\[ a^n - b^n = (a-b)(a-\omega b)(a- \omega ^2b) \cdots (a-\omega^{n-1} b)  \]
for any~$a, b \in A$ and~$\omega \in A^\times$ of order~$n$. However, this is not true in general, as seen with the example~$A= \z/12\z$, $a= -1$, $b = \omega = 7$ and~$n=2$.
\end{rmk}

This lemma will be the object of the next section. For now, we use it together with the previous one in order to prove~$(\dagger)$ by induction on~$r$.

In the case~$r= 1$ we are required to show that 
\[ \prod_{k=0}^{p-1}(1 + kt) = 1 - t^{p-1} \, ,  \]
an equality in~$\f_p[t]$. However, note that both sides are polynomials of the same degree~$p-1$, with constant term~$1$, and both having the same distinct~$p-1$ roots, namely all non-zero elements of~$\f_p$. Indeed if~$x \in \f_p$ is non-zero, then the left hand side vanishes for~$t = x$ since it has a factor~$1 + kx$ with~$k = -1/x$ ; the right hand side vanishes at~$x$, classically, since~$\f_p^{\times}$ has order~$p-1$. Thus these two polynomials, defined over a field, must agree.

Now suppose that~$(\dagger)$ holds for~$r$, and let us prove it for~$r+1$. We start by breaking the defining product for~$P_{p^{r+1}}$ into two factors:

\begin{align*}
P_{p^{r+1}}  (t) = \prod_{k=0}^{p^{r+1} - 1} (1 + kt) & = \left( \prod_{ p | k}(1+kt)  \right) \left( \prod_{p \wedge k = 1} (1+kt) \right) \\
& = P_{p^r} (pt)  \left( \prod_{p \wedge k = 1} (1+kt) \right) \\
& \equiv \prod_{p \wedge k = 1} (1+kt)  \mo{p^{r+1}}
\end{align*}
using lemma~\ref{lem-p-of-pt}. 

It will be convenient to use a group-theoretic notation. Let~$G = \left( \z / p^{r+1} \z \right)^\times$, a multiplicative group of order~$p^r(p-1)$; we are trying to evaluate 
\[ \prod_{k \in G} (1+kt)  \in \z/p^{r+1} \z [t]\, .   \]
Also, let~$H = \langle \omega \rangle$, the group generated by an element~$\omega $ of order~$p-1$. The existence of~$\omega $ is guaranteed by the classical fact that~$G$ is cyclic. For the same reason, $H$ is the unique subgroup of~$G$ having order~$p-1$, and it can be described as the subgroup of~$p^r$-th powers of elements of~$G$, or as the subgroup of elements of order dividing~$p-1$.

Lemma~\ref{lem-an-minus-bn}, used with~$a=1$ and~$b= -kt$, can be stated thus: 
\[ \prod_{\ell \in kH} (1 + \ell t) = 1 - k^{p-1} t^{p-1} \, ,   \]
recalling that~$p$ is odd. Furthermore, we note that the two sides of this equality remain unchanged if~$k$ is replaced by~$kh \in kH$: this is obvious for the left hand side, and for the right hand side one has~$(kh)^{p-1} = k^{p-1}$ since~$h \in H$. We will thus employ a slight abuse of notation and write 
\[ \prod_{k \in G} (1 + kt) = \prod_{k \in G/H} (1 - k^{p-1} t^{p-1}) \, .  \]

Now, an element of~$\left(\z/p^{r+1} \z\right)^\times$ is of the form~$k^{p-1}$ if and only if its order is a power of~$p$, if and only if it maps to~$1$ in~$\left(\z / p \z\right)^\times$ under the natural projection map (which is surjective), if and only if it is of the form~$1 + pi$. More precisely the~$p^r$ different numbers~$k^{p-1}$ as~$k$ ranges over~$G/H$ are exactly the numbers~$1 + pi$ for~$0 \le i < p^r$. In the end 
\[ \prod_{k \in G} (1 + kt) = \prod_{i=0}^{p^r-1} (1 -(1 +pi) t^{p-1}) = \prod_{i=0}^{p^r-1} (1 -t^{p-1}- pit^{p-1}) \, .   \]

The conclusion will come from lemma~\ref{lem-p-of-pt} again, in a slightly different guise. Define 
\[ Q_n(X, Y) = \prod_{i=0}^{n-1}(X + iY) = X^n P_n\left(\frac{Y} {X} \right) \in \z[X, Y] \, .  \]
By lemma~\ref{lem-p-of-pt}, we see that~$Q_{p^r}(X, pY) \equiv X^{p^r} ~ [p^{r+1}]$. Now apply this with~$X= 1 - t^{p-1}$ and~$Y = -t^{p-1}$ and deduce that 
\[ \prod_{k \in G} (1 + kt) \equiv (1 - t^{p-1})^{p^r} \mo{p^{r+1}} \, ,   \]
as required.

We have now established theorem~\ref{thm-poly-odd-prime}, except for lemma~\ref{lem-an-minus-bn} which still requires proof. The next section deals with this.

\section{A ring identity} \label{sec-cyclo}

Let~$A$ be a commutative ring, $a, b \in A$ and~$\omega \in A^\times$ whose order is~$n$. We seek sufficient conditions for the following identity to hold: 
\[ a^n - b^n = (a-b)(a - \omega b)(a - \omega^2 b) \cdots (a - \omega^{n-1}b)\, . \tag{*}  \]
In remark~\ref{rmk-counter-example} we have shown that (*) is not always true. However the conditions we shall give now will ensure that lemma~\ref{lem-an-minus-bn} holds. 

The most obvious remark is probably:

\begin{lem}
If~$A$ is an integral domain, (*) holds.
\end{lem}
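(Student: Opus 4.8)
The plan is to deduce (*) from the single-variable factorization $X^n - 1 = \prod_{i=0}^{n-1}(X - \omega^i)$ in $A[X]$, and then to specialize. First I would check that the $n$ elements $1, \omega, \omega^2, \dots, \omega^{n-1}$ are pairwise distinct: an equality $\omega^i = \omega^j$ with $0 \le i < j \le n-1$ would give $\omega^{j-i} = 1$ with $0 < j-i < n$, contradicting that $\omega$ has order exactly $n$ (here only the invertibility of $\omega$ is used, to perform the cancellation).

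Next comes the heart of the argument, which is where the integral-domain hypothesis is spent. Each $\omega^i$ is a root of $f(X) = X^n - 1 \in A[X]$, so by the factor theorem $X - \omega^i$ divides $f$. I would then peel these linear factors off one at a time: if $f(X) = (X - \omega^0)\cdots(X - \omega^{j-1}) g(X)$, then evaluating at $X = \omega^j$ gives $0 = (\omega^j - \omega^0)\cdots(\omega^j - \omega^{j-1}) g(\omega^j)$, and since $A$ is a domain and every factor $\omega^j - \omega^i$ with $i < j$ is nonzero, we conclude $g(\omega^j) = 0$ and may extract the next linear factor. After $n$ steps the remaining cofactor is a constant, equal to $1$ by comparison of leading coefficients. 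Thus $X^n - 1 = \prod_{i=0}^{n-1}(X - \omega^i)$ in $A[X]$. It is precisely this step that fails in $\z/12\z$, where $X^2 - 1$ has four roots — this is exactly the content of Remark~\ref{rmk-counter-example}.

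To get from here to (*), the obstacle is that $b$ need not be invertible, so one cannot simply substitute $X = a/b$. I would remove this by homogenizing: since $A$ is a domain, so is $A[X, Y]$, hence $Y$ is a non-zero-divisor there and $A[X,Y] \hookrightarrow A[X, Y, Y^{-1}]$. Writing $Z = XY^{-1}$ in the larger ring and applying the factorization just proved,
\[
X^n - Y^n = Y^n (Z^n - 1) = Y^n \prod_{i=0}^{n-1}(Z - \omega^i) = \prod_{i=0}^{n-1}(X - \omega^i Y) \, .
\]
Both ends of this chain lie in the subring $A[X,Y]$, so the identity holds there. Finally I apply the evaluation homomorphism $A[X, Y] \to A$ with $X \mapsto a$ and $Y \mapsto b$ to obtain $a^n - b^n = \prod_{i=0}^{n-1}(a - \omega^i b)$, which is (*). (Equivalently, one could pass to the field of fractions of $A$, dispose of the case $b = 0$ directly, and for $b \ne 0$ invert $b$; homogenization has the mild advantage of keeping everything inside $A$.) For the last sentence of the lemma, $A[t]$ is again an integral domain, so the same argument applies verbatim with $a$ and $b$ taken in $A[t]$. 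The only genuine subtlety is the non-invertibility of $b$ handled above; the rest is the standard root count over a domain.
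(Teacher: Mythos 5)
Your proof is correct, and it rests on the same underlying fact as the paper's proof -- a monic polynomial of degree $n$ over an integral domain with $n$ distinct roots splits into the corresponding linear factors -- but the implementation is genuinely different in its mechanics. The paper passes to the field of fractions $K$, disposes of $b=0$ separately, observes that $X^n-b^n\in K[X]$ has the $n$ distinct roots $\omega^i b$, factors it, and evaluates at $X=a$. You instead stay inside $A$: you prove $X^n-1=\prod_{i=0}^{n-1}(X-\omega^i)$ in $A[X]$ by explicitly peeling off linear factors (which makes visible exactly where the domain hypothesis is spent, and why the counterexample of Remark~\ref{rmk-counter-example} is a genuine obstruction), and you then handle the possible non-invertibility of $b$ by homogenizing, using $A[X,Y]\hookrightarrow A[X,Y,Y^{-1}]$, before specializing $X\mapsto a$, $Y\mapsto b$. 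This buys you two small things: no case split on $b=0$ and no fraction field, and it yields the identity in the ``universal'' two-variable form $X^n-Y^n=\prod_{i}(X-\omega^i Y)$ over $A$ itself -- which is precisely the evaluate-a-generic-identity strategy the paper deploys later in the proof of Proposition~\ref{prop-crit-cyclo}. The paper's route is shorter; yours is slightly more self-contained, and the closing remark that the polynomial case follows because $A[t]$ is again a domain is also fine.
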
  

\begin{proof}
In this case~$A$ embeds in its field of fractions, say~$K$. The polynomial 
\[ P = X^n - b^n \in K[X]  \]
cannot have more than~$n$ roots, and we already have~$n$ obvious ones, namely~$\omega^i b$ for~$0 \le i < n$ (this assumes that~$b \ne 0$, the case~$b=0$ being trivial). Thus one can factor~$P$, and recovers (*) upon evaluation at~$X = a$.
\end{proof}

However in our applications we are dealing with rings such as~$A = \z/p^r\z$, so we need to work more. 

Attempting to find a general identity which one could ``evaluate'' to get a formula in any ring~$A$, we end up with the proof of the following criterion.

\begin{prop} \label{prop-crit-cyclo}
Assume that~$\Phi_n(\omega ) = 0$ where~$\Phi_n$ is the~$n$-th cyclotomic polynomial. Then (*) holds.
\end{prop}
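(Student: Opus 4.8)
The plan is to exploit the factorization of $X^n - 1$ into cyclotomic polynomials over $\z$, namely $X^n - 1 = \prod_{d \mid n} \Phi_d(X)$, and more usefully the companion factorization $X^n - Y^n = \prod_{d \mid n} \Phi_d^{\mathrm{hom}}(X,Y)$ where $\Phi_d^{\mathrm{hom}}(X,Y) = Y^{\deg \Phi_d}\Phi_d(X/Y)$ is the homogenization. These are identities in $\z[X,Y]$, so they specialize to \emph{any} commutative ring $A$ and any pair $a,b \in A$. The key observation is that the roots of unity that naturally appear are not individual powers $\omega^i$ but rather the full orbit structure: $\Phi_d^{\mathrm{hom}}(a, \omega^j b)$ should be insensitive to $j$ in a suitable sense, and one wants to reorganize the product $\prod_{i=0}^{n-1}(a - \omega^i b)$ accordingly.

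First I would set $\zeta = \omega$ and introduce, for each divisor $d$ of $n$, the subset $S_d \subseteq \{0, 1, \dots, n-1\}$ of exponents $i$ for which $\omega^i$ is a primitive $d$-th root of unity in $A$ — meaning $\omega^i$ is a root of $\Phi_d$. Since $\omega$ has order exactly $n$, the powers $\omega^0, \dots, \omega^{n-1}$ are distinct, and I claim $\{0,\dots,n-1\}$ is the disjoint union of the $S_d$ over $d \mid n$, with $|S_d| = \varphi(d)$: this is because $\omega^i$ has order $n/\gcd(i,n)$ as an element generated by a genuine order-$n$ element, so $\omega^i$ is a primitive $d$-th root of unity precisely when $\gcd(i,n) = n/d$, and then $\Phi_d(\omega^i) = 0$ by the defining hypothesis applied to the order-$d$ unit $\omega^i$ (using Proposition~\ref{prop-crit-cyclo} is not needed — rather, one uses that $\Phi_d(\omega^i)=0$ follows from $\Phi_n(\omega)=0$ together with the standard identity, but let me instead argue directly). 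Concretely, the cleanest route: from $\prod_{d\mid n}\Phi_d^{\mathrm{hom}}(a,b) = a^n - b^n$ in $\z[a,b]$, substituting $b \mapsto \omega^i b$ and multiplying over $i = 0, \dots, n-1$ gives $\prod_{i=0}^{n-1}(a^n - \omega^{in}b^n) = \prod_{d \mid n}\prod_{i=0}^{n-1}\Phi_d^{\mathrm{hom}}(a,\omega^i b)$; but $\omega^{in} = 1$, so the left side is $(a^n - b^n)^n$, which overcounts. The right multiplicative approach needs more care, so the honest plan is the one below.

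The actual argument I would carry out: prove the identity in the universal ring $R = \z[X, Y, W]/(\Phi_n(W))$ with $a = X$, $b = Y$, $\omega = W$, since any $(A, a, b, \omega)$ with $\Phi_n(\omega) = 0$ is a specialization of this. In $R$, the element $W$ is a root of $\Phi_n$, hence a root of $X^n - 1$, so $W$ is a unit of order dividing $n$; moreover $R$ contains $\z[W]/(\Phi_n(W)) \cong \z[\zeta_n]$ as a subring (this quotient is a domain — the cyclotomic integers — so $\Phi_n$ remains irreducible), and over this subring $W$ has order exactly $n$. Now work in $R[X,Y]$, or really in $\z[\zeta_n][X,Y]$: there the polynomial $X^n - Y^n$ factors as $\prod_{i=0}^{n-1}(X - \zeta_n^i Y)$ because $\z[\zeta_n]$ is a domain and the $\zeta_n^i$ are the $n$ distinct $n$-th roots of unity in it — this is exactly the integral-domain case already proved in the excerpt. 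Pushing this identity along the map $\z[\zeta_n][X,Y] \to R[X,Y] \to A[X,Y]$ (sending $\zeta_n \mapsto \omega$, then $X \mapsto a$, $Y \mapsto b$) yields $a^n - b^n = \prod_{i=0}^{n-1}(a - \omega^i b)$, which is $(*)$. The final sentence of the lemma, about polynomial arguments, is then immediate by taking $A = (\z/p^r\z)[t]$, which is again a commutative ring of the required form.

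The main obstacle is the verification that $R$ (or the relevant subring) really does have $W$ of exact order $n$ and that the domain factorization transports faithfully — i.e.\ making precise that $\z[W]/(\Phi_n(W))$ is an honest subring of $R = \z[X,Y,W]/(\Phi_n(W))$ onto which everything restricts, so that the clean domain computation is legitimate before specializing. Once the universal-ring formalism is set up correctly, the rest is a routine invocation of the already-established integral-domain case. An alternative that sidesteps the subring bookkeeping is to observe directly that $(*)$ is a polynomial identity in $\z[a,b][\omega]/(\Phi_n(\omega))$ — both sides are elements of this ring, and they agree there because $\z[\zeta_n]$ is a domain — and then specialize; this is probably the presentation I would ultimately choose.
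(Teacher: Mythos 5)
Your argument is correct, and it follows the same overall strategy as the paper: establish (*) universally by means of the already-proved integral-domain case, then specialize to a given $(A,a,b,\omega)$ via the evaluation homomorphism, which is well defined exactly because $\Phi_n(\omega)=0$. The difference lies in how the universal identity over $\z$ is secured. The paper takes a rational detour: it forms the field $F=\q[\Omega]/(\Phi_n)$, applies the domain lemma in $F[A,B]$, lifts the result to $\q[\Omega,A,B]$ where the discrepancy is $P\,\Phi_n$, and then needs a separate long-division argument (using that $\Phi_n$ is monic) to conclude $P\in\z[A,B,\Omega]$ before evaluating. You instead observe that the universal ring $\z[X,Y,W]/(\Phi_n(W))\cong\z[\zeta_n][X,Y]$ is itself an integral domain in which the image of $W$ is a unit of exact order $n$, apply the domain lemma directly there, and push the identity forward along $X\mapsto a$, $Y\mapsto b$, $W\mapsto\omega$; this eliminates both the passage through $\q$ and the integrality-of-$P$ step. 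The cost is concentrated in the identification $\z[W]/(\Phi_n(W))\cong\z[\zeta_n]$, which is itself proved by essentially the same monic-division (or Gauss's lemma) argument, so both proofs rest on the same two facts — irreducibility of $\Phi_n$ over $\q$ and its monicity — just packaged differently; your packaging makes the final specialization cleaner. Your opening exploration with homogenized cyclotomic polynomials is a dead end, as you acknowledge, but the argument you ultimately settle on is complete and correct, including the closing remark that the polynomial case of Lemma~\ref{lem-an-minus-bn} follows by taking $A=\z/p^r\z[t]$.
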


Recall that the factorization of~$X^n - 1$ over~$\q[X]$ or over~$\z[X]$ is 
\[ X^n -1 = \prod_{d | n} \Phi_d(X) \, ,  \]
where the cyclotomic polynomials~$\Phi_d$ is the minimal polynomial of a primitive~$d$-th root of unity. In particular~$\Phi_d$ is irreducible over~$\q$. 

\begin{proof}
Put~$F= \q[\Omega ]/ \Phi_n(\Omega )$, which is a field as~$\Phi_n$ is irreducible. We write~$\bar \Omega $ for the image of~$\Omega $ in~$F$, and we note that~$\Phi_n(\bar \Omega ) = 0$ implies~$\bar \Omega ^n = 1$. On the other hand~$\bar \Omega ^d \ne 1$ if~$d$ is a proper divisor of~$n$, for this would imply that some other cyclotomic polynomial~$\Phi_s$ with~$s < n$ vanishes at~$\bar \Omega $ ; however~$\Phi_s$ and~$\Phi_n$ are coprime, so this would lead to a contradiction. Hence the order of~$\bar \Omega $ in~$F^\times$ is just~$n$.

The ring~$F[A, B]$ is an integral domain, where~$A$ and~$B$ are new variables, so in this ring we may apply the lemma and obtain the formula (*) with capitalized~$A, B$ and~$\bar \Omega $.

Lifting this to an identity in~$\q[\Omega, A, B]$ we get 
\[ A^n - B^n = (A - B)(A - \Omega B) \cdots (A - \Omega^{n-1} B) + P \Phi_n\, , \tag{**}  \]
with~$P \in \q[A, B, \Omega ]$. Put differently, the difference~$(A^n - B^n) - (A - B) \cdots (A - \Omega^{n-1} B) $ is divisible by~$\Phi_n(\Omega )$ in~$\q[A, B][\Omega ]$, the quotient being~$P$. At this point, a long division argument shows that in fact~$P \in \z[A, B, \Omega ]$ (using that~$\Phi_n$ is unitary).

Now the identity (**) is entirely in~$\z[A, B, \Omega ]$ and may be evaluated in any ring. Thus if we evaluate at~$A= a$, $B=b$, and~$\Omega = \omega $, where we have arranged to have~$\Phi_n(\omega ) = 0$, we obtain (*).
\end{proof}

We add a little criterion that will guarantee that~$\Phi_n(\omega )= 0$ automatically.

\begin{lem}
Let~$\omega \in A^\times$ have order~$n$. Assume that the image of~$\omega $ in~$(A/M)^\times$ also has order~$n$, for any maximal ideal~$M$ of~$A$. Then~$\Phi_n(\omega ) = 0$.
\end{lem}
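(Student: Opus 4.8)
The plan is to prove the contrapositive in spirit: I want to show that the polynomial $\Phi_n(X)$, evaluated at $\omega$, gives $0$, knowing only that $\Phi_n(\bar\omega) = 0$ in every residue field $A/M$. The natural strategy is to work with the element $x = \Phi_n(\omega) \in A$ and prove $x = 0$ by showing it lies in the Jacobson radical and is simultaneously idempotent-like, or more directly, that $x$ lies in every maximal ideal (hence in the Jacobson radical) while also being a unit times something — but the cleanest route is: first establish that $\omega^n = 1$ forces $\prod_{d \mid n} \Phi_d(\omega) = 0$, so $x \cdot \prod_{d \mid n,\, d < n} \Phi_d(\omega) = 0$; then show the second factor is a unit. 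That second factor is a unit precisely when it lies in no maximal ideal, and that is exactly what the hypothesis on residue fields is designed to give.

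So the key steps, in order, are as follows. First, since $\omega \in A^\times$ has order exactly $n$, we have $\omega^n = 1$, i.e. $\prod_{d \mid n}\Phi_d(\omega) = 0$ in $A$, using the factorization $X^n - 1 = \prod_{d\mid n}\Phi_d(X)$ over $\z[X]$. Separate off the top term: $\Phi_n(\omega)\cdot \prod_{\substack{d \mid n\\ d<n}}\Phi_d(\omega) = 0$. Second, fix a maximal ideal $M$ of $A$, and pass to $A/M$; write $\bar\omega$ for the image. By hypothesis $\bar\omega$ has order $n$ in $(A/M)^\times$. For any proper divisor $d$ of $n$, $\bar\omega^d \ne 1$ (since the order is exactly $n$), and I claim $\Phi_d(\bar\omega) \ne 0$ in the field $A/M$: indeed, a root of $\Phi_d$ in a field is a primitive $d$-th root of unity, hence has multiplicative order $d < n$; but $\bar\omega$ has order $n$, so $\bar\omega$ is not such a root. (One small care: in characteristic dividing $d$ the cyclotomic polynomial can acquire repeated factors, but a root of $\Phi_d$ still has order dividing $d$, and that is all I need.) Hence $\Phi_d(\bar\omega)\ne 0$, i.e. $\Phi_d(\omega)\notin M$, for every proper divisor $d$. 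Therefore $\prod_{\substack{d\mid n\\ d<n}}\Phi_d(\omega)\notin M$. Third, since $M$ was arbitrary, this product lies in no maximal ideal, so it is a unit of $A$. Combining with the equation from the first step, $\Phi_n(\omega) = 0$.

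The main obstacle — really the only subtle point — is the characteristic-$p$ behaviour in step two: one must not claim $\Phi_d$ is separable or that its roots are distinct, only that any root of $\Phi_d$ in a field has multiplicative order dividing $d$ (which follows since $\Phi_d(X) \mid X^d - 1$), and that a primitive $n$-th root of unity is therefore not among them when $d$ is a proper divisor of $n$. Everything else is a routine assembly: the factorization of $X^n-1$ is classical, "lies in no maximal ideal $\Rightarrow$ unit" is standard, and the final cancellation is immediate. I would present the argument in exactly the three steps above.
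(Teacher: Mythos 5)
Your proposal is correct and is essentially the paper's own argument: both start from $\omega^n-1=0=\prod_{d\mid n}\Phi_d(\omega)$, use the residue-field hypothesis to see that $\Phi_d(\omega)$ lies in no maximal ideal for each proper divisor $d$ (via $\Phi_d\mid X^d-1$, so a root of $\Phi_d$ in $A/M$ would have order dividing $d<n$), and conclude that the extraneous factor is a unit, whence $\Phi_n(\omega)=0$. The only cosmetic difference is that you argue the whole product over $d<n$ is a unit at once, while the paper argues factor by factor by contradiction; your caveat about inseparability in characteristic $p$ is a correct and welcome precision but not a change of method.
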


\begin{proof}
Start with 
\[ \omega^n - 1 = 0 = \prod_{d|n} \Phi_d(\omega ) \, .   \]
It suffices to show that for~$d \ne n$ the element~$\Phi_d(\omega )$ is invertible, for then this equality can be simplified and we get the result. 

Suppose for a contradiction, then, that there is a~$d \ne n$ such that $\Phi_d(\omega )$ is not invertible, and thus belongs to a maximal ideal~$M$. In the field~$K = A / M$ we certainly have 
\[ \bar \omega ^d - 1 = \prod_{s|d} \Phi_s(\bar \omega ) = 0 \, ,   \]
since~$\Phi_d(\bar \omega ) = 0$, where~$\bar \omega $ is the image of~$\omega $ in~$K$. As a result, the order of~$\bar \omega $ in~$K$ divides~$d$, a contradiction.
\end{proof}

\begin{coro}
Let~$\omega \in \left( \z/p^r\z \right)^\times$ have order~$p-1$, where~$p$ is prime. Then~$\Phi_{p-1}(\omega ) = 0$.
\end{coro}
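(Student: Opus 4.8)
The plan is to apply the lemma immediately preceding this corollary, taken with $A = \z/p^r\z$. The first point to record is that this ring is local: its unique maximal ideal is $M = p\z/p^r\z$, and the residue field is $A/M \cong \f_p$. Hence the hypothesis ``for any maximal ideal $M$'' collapses to a single verification, namely that the image $\bar\omega$ of $\omega$ under the reduction map $\pi \colon (\z/p^r\z)^\times \to \f_p^\times$ still has order $p-1$.

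To check this, I would observe that $\ker \pi$ consists exactly of the classes $1 + pi$ with $0 \le i < p^{r-1}$, so it has order $p^{r-1}$ and is in particular a $p$-group. The cyclic subgroup $\langle \omega \rangle$ has order $p-1$, which is prime to $p$; therefore $\langle \omega \rangle \cap \ker \pi$ is a subgroup of order dividing both $p-1$ and $p^{r-1}$, hence trivial. Consequently $\pi$ restricted to $\langle \omega \rangle$ is injective, and $\bar\omega = \pi(\omega)$ has the same order $p-1$ as $\omega$. The lemma then yields $\Phi_{p-1}(\omega) = 0$, as desired.

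There is no serious obstacle here; the only step that requires a moment's care is the assertion that the order of $\omega$ does not drop upon reduction modulo $p$, and this rests precisely on the coprimality of $p-1$ and $p^{r-1}$. (When $p = 2$ everything is trivial anyway: then $\omega = 1$ and $\Phi_1(X) = X - 1$.) Combined with Proposition~\ref{prop-crit-cyclo}, this corollary is exactly what is needed to deduce Lemma~\ref{lem-an-minus-bn}, completing the argument left open in the previous section.
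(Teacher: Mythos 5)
Your proof is correct and follows essentially the same route as the paper: both exploit that $\z/p^r\z$ is local with residue field $\f_p$, and then show the order of $\omega$ cannot drop under reduction because the kernel of $(\z/p^r\z)^\times \to \f_p^\times$ has order $p^{r-1}$, which is prime to $p-1$. Your phrasing via trivial intersection and injectivity of $\pi$ on $\langle\omega\rangle$ is just a minor repackaging of the paper's divisibility argument.
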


\begin{proof}
There is just one maximal ideal~$M$ in~$\z/ p^r \z$, and it is the kernel of the natural map~$\z/p^r \z \to \z/ p \z$. So let~$d$ be the order of~$\bar \omega $ in~$\left(\z/p\z\right)^\times$. Then~$d$ is a divisor of~$p-1$, and we need to show that~$d=p-1$ so the last lemma can be applied.

The kernel of the surjective homomorphism 
\[ \left(\z/p^r\z\right)^\times \longrightarrow \left(\z/p\z\right)^\times  \]
has order~$p^{r-1}$. So the order of~$\omega $, which is~$p-1$, must divide~$dp^{r-1}$. However since~$p-1$ is prime to~$p$, we see that~$p-1$ divides~$d$, and we are done.
\end{proof}

We have now all the ingredients to show lemma~\ref{lem-an-minus-bn} from the previous section. Indeed, the last corollary shows that~$\Phi_{p-1}(\omega ) = 0$. Then proposition~\ref{prop-crit-cyclo} can be applied, either to the ring~$A= \z/p^r\z$ or to~$A= \z/ p^r\z[t]$, to deduce that (*) holds for~$n=p-1$. 

\section{The case~$p=2$} \label{sec-p2}

In this section we prove theorem~\ref{thm-poly-p2}, that is, for~$r \ge 3$ we prove that 
\[ P_{2^r} (t) \equiv (1-t^2)^{2^{r-2}} + 2^{r-1} (t+t^2+t^{2^{r-1} +1 } + t^{2^{r-1} +2 }) \mo{2^r} \, .  \tag{$\dagger$}  \]

The arguments are entirely different from those presented in the odd case, and ultimately, simpler. 

Before proving $(\dagger)$, we may at once establish a weaker result:

\begin{lem}
One has 
\[ P_{2^r} (t) \equiv (1-t^2)^{2^{r-2}} \mo{2^{r-1}} \, .    \]
\end{lem}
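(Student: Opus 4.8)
The plan is to prove the weaker congruence $P_{2^r}(t) \equiv (1-t^2)^{2^{r-2}} \pmod{2^{r-1}}$ by induction on $r$, mimicking (in a much simpler form) the split-into-even-and-odd-factors argument used for odd primes. The base case $r=3$ can be checked by direct computation: one expands $P_8(t) = \prod_{k=0}^{7}(1+kt)$ and reduces modulo $4$, comparing with $(1-t^2)^2 = 1 - 2t^2 + t^4$; this is a finite and harmless calculation.

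For the inductive step, suppose the congruence holds for some $r \ge 3$ and consider $P_{2^{r+1}}(t)$. As in section~\ref{sec-p-odd}, I would split the product over $0 \le k < 2^{r+1}$ into the even indices and the odd indices:
\[
P_{2^{r+1}}(t) = \left(\prod_{k \text{ even}} (1+kt)\right)\left(\prod_{k \text{ odd}} (1+kt)\right) = P_{2^r}(2t)\cdot \prod_{k \text{ odd}} (1+kt).
\]
The first factor $P_{2^r}(2t)$ is $\equiv 1 \pmod{2^{r}}$ by the analogue of Lemma~\ref{lem-p-of-pt} for $p=2$ (or directly: the inductive hypothesis gives $P_{2^r}(t) = (1-t^2)^{2^{r-2}} + 2^{r-1}A(t)$ with $A(0)=0$, and substituting $2t$ makes the $2^{r-1}A(2t)$ term divisible by $2^r$, while one checks $v_2\binom{2^{r-2}}{\ell}2^{2\ell} \ge r$ for $\ell \ge 1$). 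Hence modulo $2^r$ we are reduced to computing $\prod_{k \text{ odd}} (1+kt)$, the product over the units of $\z/2^{r+1}\z$.

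The odd-index product is where the case $p=2$ diverges from the odd-prime case, because $(\z/2^{r+1}\z)^\times$ is not cyclic for $r+1 \ge 3$; it is $\z/2 \times \z/2^{r-1}$. The role played by $H = \langle\omega\rangle$ before is now played by the subgroup $\{\pm 1\}$, and Lemma~\ref{lem-an-minus-bn} is replaced by the trivial identity $(1+kt)(1-kt) = 1 - k^2 t^2$. Grouping the $2^r$ odd residues into the $2^{r-1}$ pairs $\{k, -k\}$ gives
\[
\prod_{k \text{ odd}} (1+kt) = \prod_{[k]} (1 - k^2 t^2),
\]
and now $k^2$ as $k$ runs over a set of representatives of $(\z/2^{r+1}\z)^\times/\{\pm1\}$ describes the squares. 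The squares in $(\z/2^{r+1}\z)^\times$ are exactly the elements $\equiv 1 \pmod 8$, and each is hit with some multiplicity; more precisely one should check that the multiset $\{k^2\}$ consists of each residue $1 + 8j$ ($0 \le j < 2^{r-2}$) taken twice, so that $\prod_{[k]}(1-k^2t^2) = \prod_{j=0}^{2^{r-2}-1}(1 - (1+8j)t^2)^2$. Setting $u = t^2$, this is $\big(\prod_{j}(1 - u - 8ju)\big)^2 = \big(Q_{2^{r-2}}(1-u, -8u)\big)^2$ in the notation $Q_m(X,Y) = \prod(X+iY)$ from section~\ref{sec-p-odd}; and $Q_{2^{r-2}}(X, 8Y) \equiv X^{2^{r-2}} \pmod{2^{?}}$ by the same valuation estimate, so $\prod_{k\text{ odd}}(1+kt) \equiv (1-t^2)^{2^{r-1}} \pmod{2^r}$, which is the desired congruence for $r+1$ (modulo $2^r = 2^{(r+1)-1}$).

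The main obstacle will be pinning down exactly the multiplicity with which each square $1+8j$ arises among the $k^2$, and confirming that the resulting power of $2$ coming from $Q_{2^{r-2}}(1-u,-8u) \equiv (1-u)^{2^{r-2}}$ is large enough — the factor of $8 = 2^3$ in the $Y$-slot is generous, so the valuation count $v_2\binom{2^{r-2}}{\ell}2^{3\ell} = (r-2) - v_2(\ell) + 3\ell$ should comfortably exceed $r-1$ for all $\ell \ge 1$, but this bookkeeping, together with the squaring that doubles everything, needs to be done carefully to land on the exact modulus $2^{r-1}$ claimed. Everything else is a faithful transcription of the odd-prime argument with $\{\pm 1\}$ in place of $\langle\omega\rangle$.
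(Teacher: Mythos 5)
Your reduction to the odd residues is fine (the multiplicity claim is correct: for $r+1\ge 3$ the squares in $\left(\z/2^{r+1}\z\right)^\times$ are exactly the classes $\equiv 1 \mo{8}$, each arising from two antipodal pairs $\{k,-k\}$, and $P_{2^r}(2t)\equiv 1 \mo{2^r}$ follows as you say from the induction hypothesis at level $r$). The gap is in the final estimate. The coefficients of $Q_{2^{r-2}}(1-u,-8u)=\prod_{j=0}^{2^{r-2}-1}\bigl((1-u)-8ju\bigr)$ are \emph{not} binomial: the $\ell$-th term is $(-8)^{\ell}\,c(2^{r-2},2^{r-2}-\ell)\,u^{\ell}(1-u)^{2^{r-2}-\ell}$, a Stirling number times $8^{\ell}$, so the count $v_2\bigl[\binom{2^{r-2}}{\ell}2^{3\ell}\bigr]=(r-2)-v_2(\ell)+3\ell$ does not apply. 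A priori you only get $Q_{2^{r-2}}(1-u,-8u)=(1-u)^{2^{r-2}}+D$ with $D\equiv 0 \mo{8}$, and after squaring the cross term $2(1-u)^{2^{r-2}}D$ is only known to be divisible by $16$, which is short of $2^{r}$ as soon as $r\ge 5$. What you actually need is $v_2\bigl(c(2^{r-2},2^{r-2}-\ell)\bigr)+3\ell\ge r-1$ for all $\ell\ge 1$; this is true (already for $\ell=1$ it uses $v_2\bigl(c(2^{r-2},2^{r-2}-1)\bigr)=r-3$), but such valuations are essentially the content of the theorems being proved, so they cannot be assumed. The step can be repaired by switching to a \emph{strong} induction and invoking the lemma at level $r-2$ — a $p=2$ analogue of Lemma~\ref{lem-p-of-pt} with $8t$ in place of $pt$, giving $Q_{2^{r-2}}(X,8Y)\equiv X^{2^{r-2}} \mo{2^r}$ — together with extra base cases; but your induction hypothesis at level $r$ alone does not supply this, so the proof as written does not close.

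For comparison, the paper's proof avoids all of this: since $1+(k+2^{r-1})t\equiv 1+kt \mo{2^{r-1}}$, one has $P_{2^r}(t)\equiv P_{2^{r-1}}(t)^2 \mo{2^{r-1}}$, and squaring the inductive hypothesis $P_{2^{r-1}}(t)=(1-t^2)^{2^{r-3}}+2^{r-2}A(t)$ kills both the cross term ($2^{r-1}$) and the squared error ($2^{2r-4}$, $\ge 2^{r-1}$ for $r\ge 3$). Your unit-group decomposition is machinery better suited to the finer statement of Theorem~\ref{thm-poly-p2}, for which this weak lemma is meant to be the cheap input.
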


\begin{proof}
This is immediate by induction on~$r$, using that 
\[ P_{2^r}(t) \equiv P_{2^{r-1}}(t)^2 \mo{2^{r-1}} \, . \qedhere  \]
\end{proof}

We setup some notation. Write 
\[ P_{2^r}(t) = (1 -t^2)^{2^{r-2}} + 2^{r-1} Q_r(t) + 2^{r} R_r(t) \]
where the non-zero coefficients of~$Q_r(t)$ are equal to~$1$. Our goal is to prove that~$Q_r = t+t^2+t^{2^{r-1} +1 } + t^{2^{r-1} +2 }$.

We note the following.

\begin{lem} \label{lem-ind-p2}
For~$r > 1$ one has
\[ P_{2^r}(t) \equiv P_{2^{r-1}}(t) \, P_{2^{r-1}} (-t) \, (1 - 2^{r-1} t) \mo{2^r}\, .   \]
\end{lem}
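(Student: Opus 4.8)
The plan is to obtain the identity by pairing the factors of the product $P_{2^r}(t) = \prod_{k=0}^{2^r-1}(1+kt)$ according to the involution $k \mapsto 2^r - k$ of $\{0,1,\dots,2^r-1\}$. First I would isolate the two factors that this involution does not carry into genuine pairs: the factor $k=0$, which equals $1$, and the factor $k=2^{r-1}$ — the unique fixed point of $k\mapsto 2^r-k$ on $\{1,\dots,2^r-1\}$, which exists precisely because $r>1$ — which equals $1+2^{r-1}t$. The remaining $2^r-2$ indices then split into the pairs $\{k,\,2^r-k\}$ with $1 \le k \le 2^{r-1}-1$.

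For each such pair one computes
\[ (1+kt)\bigl(1+(2^r-k)t\bigr) = 1 + 2^r t + k(2^r-k)\,t^2 \equiv 1 - k^2 t^2 \mo{2^r}, \]
since $2^r t \equiv 0$ and $k(2^r - k) = 2^r k - k^2 \equiv -k^2 \mo{2^r}$. Taking the product over $1 \le k \le 2^{r-1}-1$ and reinstating the two isolated factors gives
\[ P_{2^r}(t) \equiv (1 + 2^{r-1} t)\prod_{k=1}^{2^{r-1}-1}\bigl(1 - k^2 t^2\bigr) \mo{2^r}. \]

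It then remains to recognise the product on the right. From the definition,
\[ P_{2^{r-1}}(t)\,P_{2^{r-1}}(-t) = \prod_{k=0}^{2^{r-1}-1}(1+kt)(1-kt) = \prod_{k=0}^{2^{r-1}-1}\bigl(1 - k^2 t^2\bigr), \]
and the $k=0$ factor is $1$, so this equals $\prod_{k=1}^{2^{r-1}-1}(1-k^2t^2)$. Finally, $1 + 2^{r-1}t \equiv 1 - 2^{r-1}t \mo{2^r}$ because $2\cdot 2^{r-1} = 2^r$, and substituting turns the previous display into the asserted congruence. There is no genuine difficulty in this argument; the only points requiring a little care are the index bookkeeping — that $k\mapsto 2^r-k$ has exactly one fixed point on $\{1,\dots,2^r-1\}$ once $r>1$, with $k=0$ handled separately — and the observation that the computation is valid only modulo $2^r$, where both the term $2^r t$ and the sign in front of $2^{r-1}t$ are invisible.
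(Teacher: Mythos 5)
Your proof is correct and rests on the same idea as the paper's: the paper simply re-indexes the product over $k\in\{-2^{r-1},\dots,2^{r-1}-1\}$ modulo $2^r$, which is exactly the symmetry $k\mapsto -k$ that you exploit by pairing $k$ with $2^r-k$ (your detour through the factors $1-k^2t^2$ and the congruence $1+2^{r-1}t\equiv 1-2^{r-1}t \mo{2^r}$ reassembles into the same factorization).
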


\begin{proof}
We return to the definition:

\begin{align*}
P_{2^r}(t) & = \prod_{k=0}^{2^r - 1}  \, (1 + kt) \\
          & \equiv \prod_{k= - 2^{r-1}}^{2^{r-1} - 1} \, (1 +kt)    \mo{2^r}
\end{align*}
by shifting~$k$ by~$-2^{r-1}$. The formula follows at once.
\end{proof}

We proceed to prove~$(\dagger)$ by induction on~$r$. For~$r= 3$ one has 
\begin{align*}
P_8(t) & = 5040 t^7 + 13068 t^6 + 13132 t^5 + 6769 t^4 + 1960 t^3 + 322 t^2 + 28 t + 1 \\
      & \equiv 4t^6 + 4t^5 + t^4 + 2t^2 + 4t + 1 \mo{8} \\
      & = (1 - t^2)^{2} + 4(t + t^2 + t^{5} + t^6) \, . 
\end{align*}

Now suppose~$r \ge 4$ and that~$(\dagger)$ has been established for~$r-1$, so that~$Q_{r-1} = t + t^2 + t^{2^{r-2} +1} + t^{2^{r-2} + 2}$. Lemma~\ref{lem-ind-p2} encourages us to estimate $P_{2^{r-1}}(t) \, P_{2^{r-1}} (-t)$, and it turns out that, in our notation,
\[ P_{2^{r-1}}(t) \, P_{2^{r-1}} (-t) \equiv (1-t^2)^{2^{r-2}} + 2^{r-2} (1-t^2)^{2^{r-3}} (Q_{r-1}(t) + Q_{r-1}(-t)) \mo{2^r} \, ,   \]
by a direct computation which takes into account that $2^{r-1}(R_{r-1}(t) + R_{r-1} (-t)) \equiv 0 ~ [2^r]$. The same lemma implies that 
\[ P_{2^r} (t) \equiv (1 - t^2)^{2^{r-2}} + 2^{r-2} (1-t^2)^{2^{r-3}} (Q_{r-1}(t) + Q_{r-1}(-t)) -2^{r-1}t(1-t^2)^{2^{r-2}} \mo{2^r} \]
from which one derives the identity~$(\dagger)$ by a simple rearrangement of the terms.

\section{Application to Chern classes} \label{sec-chern}

As an application of our congruences, we compute some Chern classes. We recall that, if~$G$ is a finite group, one can construct its cohomology ring~$H^*(G, \z)$, which is graded ; when~$\rho \colon G \to GL_N(\C)$ is a representation of~$G$, it has Chern classes~$c_1(\rho )$, $\ldots$, $c_N(\rho )$, with~$c_i(\rho ) \in H^{2i}(G, \z)$. 

In this paper we will restrict ourselves to the case when~$G= C_n$ is a cyclic group of order~$n$, say generated by~$x$. We will let~$\rho $ denote the representation~$G \to \C^\times$ mapping~$x$ to~$e^{\frac{2i \pi} {n}}$. In this situation one has 
\[ H^*(C_n, \z) = \frac{\z[c_1(\rho )]} {n c_1( \rho ) = 0} \, ,   \]
and no further knowledge about cohomology is required from the reader. We will write~$t = c_1(\rho )$, so that computing in~$H^*(G, \z)$ is much like computing in~$\z/n\z[t]$ (except for the constant terms). 

The first obvious computation which we can easily deal with now is that of the Chern classes of the {\em regular representation}. The latter is obtained by letting~$G$ act on itself by multiplication on the left, and then turning this into a representation by taking a complex vector space~$V_n$ with basis in bijection with the elements of~$G$, and extending the action linearly. It is a basic result that 
\[ V_n = \bigoplus_{k=0}^{n-1} \rho ^{k} \, ,   \]
where~$\rho ^k(x) = (\rho (x))^k = e^{\frac{2ki\pi} {n}}$. The usual formulas for Chern classes imply then

\begin{align*}
1 + c_1(V_n) + c_2(V_n) + \cdots + c_n(V_n) = \prod_{k=0}^{n-1}(1 + k c_1(\rho )) = P_n(t) \, . 
\end{align*}
This identity is graded, so that~$c_k(V_n)$ is the degree-$2k$ piece of~$P_n(t)$, that is~$c_k(V_n)= c(n, n-k) t^k$. Thus our results apply, indicating when~$c(n, n-k)$ is~$0$ mod~$n$, thereby indicating when~$c_k(V_n)$ is zero. 

We further specialize to~$n= p^r$ where~$p$ is odd, in order to obtain easy-to-state results. Corollary~\ref{coro-vp-stirling-odd} implies, for example, the following proposition.

\begin{prop}
Let~$V_{p^r}$ denote the regular representation of a cyclic group of order~$p^r$, where~$p$ is an odd prime. Then~$c_k(V_{p^r}) \ne 0$ if and only if~$p-1$ divides~$k$. When~$k= \ell(p-1)$ then, putting~$e= r-2 - v_2(\ell)$, one has~$c_{k}(V_{p^r}) = p^em t^{k}$, where~$m$ is prime to~$p$ and~$t= c_1(\rho )$ as above.
\end{prop}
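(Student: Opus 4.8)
The plan is to extract everything from the decomposition already recorded in the text. Since $V_{p^r} = \bigoplus_{k=0}^{p^r-1}\rho^k$ gives total Chern class $P_{p^r}(t)$, and since $t = c_1(\rho)$ generates a copy of $\z/p^r\z$ in each positive degree of $H^*(C_{p^r},\z)$, taking the degree-$2k$ part yields $c_k(V_{p^r}) = c(p^r, p^r-k)\,t^k$. Everything therefore reduces to the residue of $c(p^r, p^r-k)$ modulo $p^r$: the class vanishes exactly when $p^r \mid c(p^r,p^r-k)$, and otherwise equals $p^e m\,t^k$, where $p^e$ is the exact power of $p$ dividing $c(p^r,p^r-k)$ and $m$ is prime to $p$.

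Next I would feed this into Corollary~\ref{coro-vp-stirling-odd}, watching the index shift: its variable is the second Stirling argument $\kappa = p^r-k$, so the quantity it calls ``$p^r-\kappa$'' is our $k$. Thus if $p-1 \nmid k$ then $v_p(c(p^r,p^r-k)) \ge r$ and $c_k(V_{p^r}) = 0$; and if $k = \ell(p-1)$ the corollary gives $v_p(c(p^r,p^r-k)) = r-1-v_p(\ell)$, which is $<r$ for $1 \le \ell \le p^{r-1}$, so the class is then nonzero. This is the ``if and only if'' of the proposition and simultaneously pins down the power of $p$. One caveat I would flag: by Theorem~\ref{thm-main} the class is $(-1)^\ell\binom{p^{r-1}}{\ell}t^k$, and $\binom{p^{r-1}}{\ell}$ vanishes once $\ell > p^{r-1}$, i.e. once $k > p^r - p^{r-1}$; so the clean dichotomy is to be read for $k$ in the range $\ell \le p^{r-1}$.

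The step I expect to be the genuine sticking point is matching the displayed exponent. The computation above gives $e = r-1-v_p(\ell)$, not the printed $e = r-2-v_2(\ell)$. The latter cannot be correct for an odd prime: already $p=3$, $r=1$ forces $\ell=1$ and $c_2(V_3) = 2t^2 \ne 0$ with $e=0$, whereas $r-2-v_2(\ell) = -1$. Indeed $r-2-v_2(\ell)$ is verbatim the $p=2$ formula of the second corollary, so the exponent in the proposition is a transcription slip inherited from that case. The argument I have sketched proves the intended statement, namely $c_k(V_{p^r}) = p^{\,r-1-v_p(\ell)}\,m\,t^k$ with $m$ prime to $p$, and requires nothing beyond Corollary~\ref{coro-vp-stirling-odd} together with the index bookkeeping above.
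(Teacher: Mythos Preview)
Your argument is exactly the paper's: the proposition is stated as an immediate consequence of Corollary~\ref{coro-vp-stirling-odd}, and you have simply unpacked that implication with the correct index shift $\kappa = p^r - k$.

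Both of your caveats are valid and worth recording. The printed exponent $e = r-2-v_2(\ell)$ is a transcription slip---it is verbatim the $p=2$ formula from the second corollary---and your computation $e = r-1-v_p(\ell)$ is the intended one; your sanity check at $p=3$, $r=1$ confirms this. Your range observation is also correct: for $\ell > p^{r-1}$ (equivalently $k > p^r - p^{r-1}$) the binomial $\binom{p^{r-1}}{\ell}$ vanishes, so $c_k(V_{p^r}) = 0$ even though $p-1 \mid k$, and the ``if and only if'' should be read with this restriction. For instance, $c_8(V_9) = c(9,1)\,t^8 = 8!\,t^8 = 0$ in $H^{16}(C_9,\z)$ since $v_3(8!) = 2$.
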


We can go further and deal with the Chern classes of any {\em permutation representation}. That is, starting with any~$G$-set~$X$, one can also take a complex vector space with a basis in bijection with~$X$ and obtain the representation which we shall write~$\C[X]$.  Next, any~$G$-set~$X$ splits up as the disjoint union of transitive~$G$-sets; correspondingly~$\C[X]$ splits up as a direct sum, so its Chern classes can, classically, be obtained from those of the summands. 

Thus we are reduced to dealing with a~$G$-set of the form~$G/H$. However our restrictions on~$G$ will facilitate the analysis greatly. In fact:

\begin{prop}
When~$H$ is non-trivial, the Chern classes of the representation~$\C[G/H]$ are~$0$ in the cohomology of~$G$.
\end{prop}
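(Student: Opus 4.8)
The plan is to compute the total Chern class of $\C[G/H]$ explicitly, as the value at a suitable point of one of the polynomials $P_n$ of Section~\ref{sec-poly}, and then to show that this value equals $1$ in $H^*(G,\z)$, so that every $c_j(\C[G/H])$ with $j \ge 1$ vanishes. First I would pin down the representation: as $G = \langle x\rangle$ is cyclic of order $p^r$, a non-trivial subgroup $H$ has order $p^s$ with $1 \le s \le r$, and the action of $G$ on $G/H$ factors through the quotient map $q\colon G \to G/H$, the group $G/H$ being cyclic of order $m := p^{r-s}$. Hence $\C[G/H]$ is the pull-back along $q$ of the regular representation of $G/H$; since $\rho$ has order $p^r$, the pull-back of the basic character of $G/H$ is $\rho^{p^s}$, and therefore
\[ \C[G/H] \;=\; \bigoplus_{k=0}^{m-1} \rho^{p^s k}\, . \]
Using $c_1(\rho^{j}) = j\,c_1(\rho) = jt$ and multiplicativity of the total Chern class, this gives, in $H^*(G,\z) = \z[t]/(p^r t)$,
\[ 1 + c_1(\C[G/H]) + c_2(\C[G/H]) + \cdots \;=\; \prod_{k=0}^{m-1}(1 + p^s k\,t) \;=\; P_{p^{r-s}}(p^s t)\, . \]

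It then remains to show that $P_{p^{r-s}}(p^s t) \equiv 1 \mo{p^r}$, the difference having zero constant term so that it is genuinely $0$ in $\z[t]/(p^rt)$. When $s = r$ this is trivial, since $P_1 = 1$. For $1 \le s \le r-1$ I would apply Lemma~\ref{lem-p-of-pt} with $r-s$ in place of $r$ — legitimate because Theorem~\ref{thm-poly-odd-prime} holds for the exponent $r-s \ge 1$ — which gives $P_{p^{r-s}}(pt) \equiv 1 \mo{p^{r-s+1}}$. Writing $P_{p^{r-s}}(pt) = 1 + p^{r-s+1} B(t)$ and evaluating at $t=0$ forces $B(0) = 0$, hence $B(t) = t\,B_1(t)$; substituting $t \mapsto p^{s-1}t$ then yields $P_{p^{r-s}}(p^s t) = 1 + p^{r}\,t\,B_1(p^{s-1}t)$, as desired. (One could instead read the needed bound $v_p\bigl(c(p^{r-s},p^{r-s}-j)\bigr) + sj \ge r$ off Corollary~\ref{coro-vp-stirling-odd}, but the polynomial route dispatches all $j$ simultaneously.)

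The computation is short, and the one delicate point — the main obstacle — is the passage from the modulus $p^{r-s+1}$ that Lemma~\ref{lem-p-of-pt} furnishes directly to the modulus $p^r$ that we actually need: a careless application of the lemma loses the factor $p^{s-1}$ coming from the substitution $t \mapsto p^{s-1}t$ and proves nothing when $s \ge 2$. One must exploit that $B$ has vanishing constant term, so that every surviving monomial of $B(p^{s-1}t)$ picks up an extra factor of $p^{s-1}$. I would also double-check the bookkeeping in the identification of $\C[G/H]$ — that it really is $\rho^{p^s}$, and not some other power, that pulls back the basic character of $G/H$ — since an off-by-one there would wreck the entire argument.
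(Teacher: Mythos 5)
Your proposal is correct and follows essentially the same route as the paper: identify $\C[G/H]$ as the pull-back of the regular representation of the quotient $G'=G/H$, so that its total Chern class is $P_{p^{r-s}}(p^s t)$ in $\z[t]/(p^r t)$, and kill the positive-degree terms using Lemma~\ref{lem-p-of-pt}. If anything, you are more careful than the paper's own (rather terse) proof on the case $|H| = p^s$ with $s \ge 2$, where the lemma only gives a congruence mod $p^{r-s+1}$ and one must, exactly as you do, use that the correction term has zero constant term so the substitution $t \mapsto p^{s-1}t$ supplies the missing factor of $p^{s-1}$.
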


\begin{proof}
Since~$G$ is cyclic, $H$ is the only subgroup of its order, call it~$d$, and~$G' = G/H$ is a cyclic group of order~$n' = n/d$, generated by the image~$x'$ of~$x$ (see above). It has a regular representation~$V_{n'}$, and~$\C[G/H]$, which we are trying to understand, is simply~$V_{n'}$ regarded as a representation of~$G$ {\em via} the quotient map~$G \to G'$. 

Above we have seen that the Chern classes of~$V_{n'}$, when viewed in the cohomology of~$G'$, are the monomials of~$P_{n'}(t')$. Here~$t'  = c_1(\rho ')$ where~$\rho '(x') = e^{\frac{2 i \pi} {n'}}$. Writing 
\[ H^*(G', \z) = \frac{\z[t']} {n't' = 0} \, ,   \]
we use the fact that the map 
\[ H^*(G', \z) \longrightarrow H^*(G, \z)  \]
sends~$t'$ to~$dt$. In turn, this is because~$\rho '$, when viewed as a representation of~$G$, is~$\rho^d$, as it sends~$x$ to~$(e^{\frac{2i \pi} {n}})^d$. 

When~$d > 1$, it is a positive power of~$p$, and lemma~\ref{lem-p-of-pt} says that~$P_{p^{r-1}}(pt) \equiv 1 ~ [p^r]$. As a result, the Chern classes of~$V_{n'}$, in the cohomology of~$G$, are~$0$ if~$n' < n$. 
\end{proof}

Together the two propositions proved in this section imply theorem~\ref{thm-chern-intro} from the introduction. Moreover this last proof indicates how, for any cyclic group~$G$ at all, the permutation representations of~$G$ can be analysed {\em via} the regular representations of various quotients~$G'$ of~$G$. Thus the computation of Chern classes for these comes down entirely to a determination of~$c(n,k)$ modulo~$n$ for various values of~$k$ and~$n$, and can be done with the help of our results.

\bibliography{myrefs}
\bibliographystyle{amsalpha}

\end{document}